\newtheorem{Thm}{Theorem}[section]
\newtheorem{Lem}[Thm]{Lemma}
\newtheorem{Pro}[Thm]{Proposition}
\newtheorem{Cor}[Thm]{Corollary}
\theoremstyle{definition}
\theoremstyle{remark}
\newtheorem{Rem}[Thm]{Remark}
\newcommand{\R}{\mathbb{R}}
\newcommand{\al}{\alpha}
\renewcommand{\phi}{\varphi}
\newcommand{\tr}{\operatorname{tr}}
\newcommand{\vol}{\operatorname{Vol}}
\renewcommand{\d}{\partial}
\begin{document}

\title[On harmonic and homogeneous almost \textbf{$\al$}-cosymplectic \textbf{$3$}-manifolds]{Some Solitons on Homogeneous Almost \textbf{$\al$}-Cosymplectic \textbf{$3$}-Manifolds and Harmonic Manifolds}

\author[N. A. Pundeer]{Naeem Ahmad Pundeer}
\address{Department of Mathematics\\
Jadavpur University\\
Kolkata-700032, India.}
\email{pundir.naeem@gmail.com}

\author[P. Ghosh]{Paritosh Ghosh}
\address{Department of Mathematics\\
Jadavpur University\\
Kolkata-700032, India.}
\email{paritoshghosh112@gmail.com}

\author[H. M. Shah]{Hemangi Madhusudan Shah}
\address{Harish-Chandra Research Institute\\
A CI of Homi Bhabha National Institute\\
Chhatnag Road, Jhunsi, Prayagraj-211019, India.}
\email{hemangimshah@hri.res.in}

\author[A. Bhattacharyya]{Arindam Bhattacharyya}
\address{Department of Mathematics\\
Jadavpur University\\
Kolkata-700032, India}
\email{bhattachar1968@yahoo.co.in}

\subjclass{53B40, 58B20, 53C25, 53D15}
\keywords{Almost $\alpha$-cosymplectic manifold, Harmonic manifold, Ricci soliton, Einstein soliton}

\begin{abstract}
In this paper, we investigate the nature of Einstein solitons, whether it is steady, shrinking or expanding on almost $\alpha$-cosymplectic $3$-manifolds. We also prove that a simply connected homogeneous almost $\alpha$-cosymplectic $3$-manifold, admitting a contact Einstein soliton, is an unimodular semidirect product Lie group. Finally, we show that a harmonic manifold admits a  non-trivial Ricci soliton if and only if it is flat. Thus we show that rank one symmetric spaces of compact as well as non-compact type are stable under a Ricci soliton. In particular, we obtain a strengthening of Theorem $1$ and Theorem $2$ of \cite{B.15}.
\end{abstract}
\maketitle

\section{\textbf{Introduction}}
\hspace{0.7cm} The study of solitons, in particular Ricci solitons, on Riemannian manifolds play a vital role in understanding the geometry of underlying manifold. It is very interesting to study Ricci and Einstein solitons on almost $\alpha$-cosymplectic $3$-manifolds. Recently, Jin and Ximin \cite{Li} showed that a simply connected homogeneous almost $\alpha$-cosymplectic $3$-manifold, admitting  contact Ricci solitons, is cosymplectic; and the manifold under consideration is an unimodular semidirect product Lie group $\mathbb{R}^2\rtimes_A\mathbb{R}$, where $A=\left(\begin{array}{cc}
  0 & b \\
-b & 0
  \end{array}\right)$, equipped with a flat left invariant cosymplectic structure.\par
Motivated by this result we show in this paper that, if a simply connected homogeneous almost $\alpha$-cosymplectic $3$-manifold, with some 
additional hypothesis, admits a contact Einstein soliton, then the manifold is an unimodular semidirect product Lie group $G$ of type $G_{0b\overline{b}}=\mathbb{R}^2\rtimes_A\mathbb{R}$, where $A=\left(\begin{array}{cc}
      0 & b \\
     -b & 0
 \end{array}\right)\neq 0$.
  And also $G$ is the Lie group $\tilde{E}^2$ equipped with its flat left invariant cosymplectic structure (see Corrollary \ref{cor}). In order to prove this result, we first obtain a characterization of almost $\alpha$-cosymplectic $3$-manifold admitting contact Einstein solitons, which is the main theorem (Theorem \ref{th5}) of Section \ref{sec3}. To establish this aforementioned theorem we derive an identity (Lemma \ref{p1}) involving scalar curvature, Lie derivative of the metric and Ricci operator on a Riemannian manifold admitting Einstein soliton. We also give some conditions on $\alpha$ for contact Einstein solitons to be steady, shrinking or expanding on almost $\alpha$-cosymplectic $3$-manifolds (see Theorem \ref{th3}).\par

Another interesting topic in the differential geometry is the geometry of harmonic manifolds.
In this paper, we prove that 
a harmonic manifold admits a non-trivial
Ricci soliton if and only if it is flat. 
The flat harmonic manifold admits Ricci solitons of steady, expanding or shrinking type. We also determine the corresponding potential function. In fact, Busemann function on $\mathbb{R}^n$ turns to be the potential function in case of steady solitons (see Theorem \ref{main} of Section \ref{sec4}).\\
Note that any rank one symmetric space is harmonic.
Therefore, in particular, we obtain that there 
are no non-trivial Ricci solitons on  rank one symmetric spaces. Thus  harmonic manifolds, in particular, rank one symmetric spaces are stable 
under a Ricci  soliton.  It is  shown in Theorem $1$ and Theorem $2$ of \cite{B.15} that  any 
small perturbation of the non-compact symmetric metric is flown back to the original metric under 
an appropriately rescaled Ricci flow. Thus we obtain 
the strengthening of this result in case of 
non-compact rank one symmetric spaces.
Moreover, we also obtain that compact rank one 
symmetric spaces are  stable 
under the Ricci solitons.\\

\par
The paper is divided into four sections. Section \ref{sec2} is devoted to the preliminaries about Ricci soliton, Einstein soliton, almost $\alpha$-cosymplectic $3$-manifolds and harmonic manifolds. In Section \ref{sec3}, we prove our main results on almost $\alpha$-cosymplectic $3$-manifold admitting contact Einstein solitons, as stated above. In the last section, we prove the main result about harmonic manifolds admitting Ricci solitons.

\section{Preliminaries}\label{sec2}
In this section, we discuss some notions required to prove the results of this paper.

\subsection{Ricci solitons}
Ricci solitons are the self similar solutions of the Ricci flow. The concept of Ricci flow was first introduced by Hamilton \cite{Ha2} in (1982), motivated by the work of Eells and Sampson \cite{Eells} on harmonic map and the flow was given by the equation
 \begin{equation*}
   \frac{\d g}{\d t}=-2S,
 \end{equation*}
where $S$ is the Ricci tensor.\\
{\it Ricci solitons} are the generalizations of the Einstein metrics and are the solutions of the equation
\begin{equation}\label{eq21}
     Ric(g)+\frac{1}{2}\mathfrak{L}_{X}g=\lambda g,
 \end{equation}
 where $Ric(X, Y)=S(X, Y)$ is the Ricci curvature tensor, $\mathfrak{L}_X$ is the Lie derivative along the direction of the vector field $X$ and $\lambda$ is a real constant. The soliton is said to be {\it shrinking} if $\lambda>0$, {\it steady} if $\lambda=0$ and {\it expanding} if $\lambda<0$.\\
 \noindent Tashiro \cite{P Petersen} proved very important result
 for complete Einstein manifolds admitting Ricci solitons. 
 
\begin{Thm} \label{Tash}\cite{P Petersen}
  Let $(M, g)$ be a complete Riemannian $n$-manifold admitting a nontrivial function $f$ such that $\operatorname{Hess}f=\lambda g$, then $(M,g)$ is isometric to a complete warped product metric and must have one of the three forms:
\begin{enumerate}
  \item $M=\mathbb{R}\times N, ~~g=dr^2+\rho^2(r)g_N$,
  \item $M=\mathbb{R}^n, ~~g=dr^2+\rho^2(r)ds_{n-1}^2, ~~r\geq0$,
  \item $M= S^n, ~~g=dr^2+\rho^2(r)ds_{n-1}^2, ~~r\in[a,b]$.
\end{enumerate}
\end{Thm}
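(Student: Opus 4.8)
The hypothesis $\operatorname{Hess}f=\lambda g$ says that the gradient $\nabla f$ is a \emph{concircular} field: for every vector field $X$ one has $\nabla_X\nabla f=\lambda X$. The plan is to show that the flow of $\nabla f$ forces a warped product structure, and then to read off the three cases from the behaviour of $f$ at its critical points together with completeness. I would begin by examining $|\nabla f|^2$: for any $X$,
\begin{equation*}
X(|\nabla f|^2)=2\langle\nabla_X\nabla f,\nabla f\rangle=2\lambda\,X(f),
\end{equation*}
so that $d(|\nabla f|^2)=2\lambda\,df$. Hence $|\nabla f|^2$ is constant on each connected regular level set and is in fact a function $h$ of $f$ alone on the open set $\Omega=\{\nabla f\neq0\}$. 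On $\Omega$ set $T=\nabla f/|\nabla f|$; using $\nabla_T\nabla f=\lambda T$ and $T(|\nabla f|)=\lambda$ one computes $\nabla_T T=0$, so the integral curves of $T$ are unit-speed geodesics, and restricting $\operatorname{Hess}f=\lambda g$ to a level set shows each regular level set $N_c=f^{-1}(c)$ is totally umbilic with mean curvature controlled by $\lambda$ and $|\nabla f|$.

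Next I would build the warped product. Let $r$ be arc length along the geodesics of $T$, normalized on a fixed regular level set $N=N_{c_0}$; then $f$ depends only on $r$, with $df/dr=|\nabla f|=\sqrt{h(f)}$. Flowing $N$ along $T$ identifies an open set with $I\times N$, and orthogonality of $T$ to the level sets together with umbilicity yields
\begin{equation*}
g=dr^2+\rho(r)^2 g_N,
\end{equation*}
where $g_N$ is the induced metric on $N$ and the warping function $\rho$ satisfies a second order ODE obtained by differentiating the umbilicity relation, so that $\rho$ is, up to scale, determined by $\lambda$ and the data on $N$. This already produces case (1) over the maximal interval $I$ of regular values.

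Finally I would invoke completeness and the critical point set of $f$. At a critical point $p$ one has $\nabla f(p)=0$ and $\operatorname{Hess}f(p)=\lambda g$, which is nondegenerate when $\lambda\neq0$; thus $p$ is an isolated nondegenerate extremum, the nearby level sets are small geodesic spheres, and $N\cong S^{n-1}$ while $\rho$ vanishes to first order at the corresponding endpoint so the metric closes up smoothly. Counting critical points gives the trichotomy: no critical point yields $M=\R\times N$ with $r\in\R$ (case (1)); exactly one critical point yields a single pole, whence $N\cong S^{n-1}$, $r\in[0,\infty)$ and $M\cong\R^n$ (case (2)); two critical points yield two poles, $r\in[a,b]$ and $M\cong S^n$ (case (3)). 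Completeness, together with the ODE for $\rho$ and the relation $df/dr=\sqrt{h(f)}$, rules out any other configuration of $I$.

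I expect the delicate step to be the analysis at the critical points: proving that a regular level set degenerates to a round geodesic sphere as $r$ approaches a pole, and verifying the precise smoothness (closing-up) conditions on $\rho$ at the pole so that the warped product extends to a genuinely smooth complete metric on $\R^n$ or $S^n$. Controlling the global behaviour of $\rho$ and $f$ through their ODEs, and showing that no third critical point can occur, is exactly where completeness must be used most carefully.
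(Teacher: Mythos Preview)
The paper does not give its own proof of this statement: Theorem~\ref{Tash} is quoted from \cite{P Petersen} (originally due to Tashiro \cite{Tash}) as a known background result, and is only invoked later in Remark~4.10 to observe that Theorem~\ref{main} is consistent with it. So there is no ``paper's proof'' to compare against.

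That said, your sketch is essentially the standard proof one finds in Petersen's book or in Tashiro's original paper: the identification of $\nabla f/|\nabla f|$ as a geodesic unit field, the umbilicity of the level sets, the resulting local warped product $dr^2+\rho^2(r)g_N$, and the trichotomy according to whether $f$ has zero, one, or two critical points. One point deserves care: you write ``$\operatorname{Hess}f(p)=\lambda g$, which is nondegenerate when $\lambda\neq0$,'' but $\lambda$ here is a \emph{function}, not a constant, so at a critical point you need $\lambda(p)\neq0$ to conclude nondegeneracy and isolation. The standard argument handles this by showing that if $\lambda(p)=0$ at a critical point then $f$ is globally constant (contradicting nontriviality), but this step should be made explicit rather than assumed. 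Apart from that, your outline is correct and matches the classical treatment.
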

\subsection{Einstein solitons}
The  {\it Einstein solitons} are the generalization of the Ricci solitons, was first introduced by Catino and Mazzieri \cite{Cat} in (2016). They are the solutions of the equation
\begin{equation}\label{eq9}
  \mathfrak{L}_Vg+2S=(2\lambda+r)g,
\end{equation}
where, Ricci tensor $S(X,Y)=g(X,QY)$, $Q$ being the Ricci operator, $r$ is the scalar curvature, $\lambda\in\mathbb{R}$ is a constant and $V$ is known as \textit{potential vector field}.\\
Einstein solitons are the self-similar solutions of the Einstein flow,
\begin{equation*}
  \frac{\partial}{\partial t}g+2S=rg.
\end{equation*}
It is said to be \textit{steady} if $\lambda=0$, \textit{shrinking} if $\lambda>0$ and \textit{expanding} if $\lambda<0$.

\subsection{Almost contact metric manifolds}
In order to define contact metric manifolds, we need the concept of Reeb vector field.\newline
{\bf Reeb vector field \cite{Bla}:} A global vector field $\xi$ on a contact manifold $M^{2n+1}$, equipped with a global $1$-form $\eta$, is called {\it Reeb vector field} or {\it characteristic vector field}, if 
any vector field $X$ satisfies $\eta(\xi)=1$ and $d\eta(X, \xi)=0$.\\\\
{\bf Almost contact manifold \cite{Bla}:}
Let $M$ be a Riemannian manifold of dimension $(2n+1)$, $n\geq1$. $M^{2n+1}$ is said to have an \textit{almost contact structure} $(\phi, \xi, \eta)$, if there exists a $(1,1)$-tensor $\phi$, a global vector field $\xi$ and a $1$-form $\eta$ such that
\begin{eqnarray}\label{eq1}
  \phi^{2}X = -X+\eta(X)\xi,~~~~
  \eta(\xi) = 1,
\end{eqnarray}
for any vector field $X$ on $M$, where $\xi$ is the \textit{Reeb vector field}. The manifold $M$ equipped with the structure $(\phi, \xi, \eta)$ is called an {\it almost contact manifold}.\\\\
{\bf Almost contact metric manifold \cite{Bla}:} 
A Riemannian metric $g$ is said to be \textit{compatible} with an almost contact structure $(\phi, \xi, \eta)$, if
\begin{equation}\label{eq2}
  g(\phi X, \phi Y)=g(X, Y)-\eta(X)\eta(Y),
\end{equation}
holds for any $X, Y \in\chi(M)$ and $(M, \phi, \xi, \eta, g)$ is called an \textit{almost contact metric manifold}.\\\\
{\bf Normal almost contact metric manifold \cite{Bla}:}
An almost contact metric manifold is said to be \textit{normal}, if for any $X, Y\in\chi(M)$ the tensor field $N=[\phi,\phi]+2d\eta\otimes\xi$ vanishes everywhere on the manifold, where $[\phi,\phi]$ is the Nijenhuis tensor of $\phi$.\\\\
{\bf Homogeneous almost contact metric manifold \cite{Li}:}
An almost contact metric manifold $(M, \phi, \xi, \eta, g)$ is said to be {\it homogeneous}, if there exists a connected Lie group $G$ of isometries acting transitively on $M$ leaving $\eta$ invariant.\\

\subsection{Cosymplectic manifolds}
A $(2n + 1)$-dimensional manifold is said to be a {\it cosymplectic manifold} \cite{Lib}, if it admits a closed, 1-form $\eta$ and $2$-form $\Phi$ such that $\eta\wedge\Phi^n$ is a volume element, where $\Phi(X,Y)=g(\phi X,Y)$ is a $2$-form on $M^{2n+1}$.\\\\
{\bf Almost cosymplectic manifold \cite{Lib}:}
If $\eta$ and $\Phi$ are not closed but $\eta\wedge\Phi^n$ is a volume form, then the manifold is called \textit{almost cosymplectic manifold}.\\\\
{\bf $\alpha$-cosymplectic manifold \cite{Pe}:}
An almost cosymplectic manifold is said to be $\alpha$-cosymplectic if $d\eta=0$ and $d\Phi=2\alpha\eta\wedge\Phi$ for some constant $\alpha$.\\\\
{\bf Almost $\alpha$-cosymplectic manifold \cite{Lib}:}
An \textit{almost $\alpha$-cosymplectic manifold} is defined as an almost contact metric manifold with $d\eta=0$ and $d\Phi=2\alpha\eta\wedge\Phi$, for any constant $\alpha$. In particular, the almost $\alpha$-cosymplectic manifold is
\begin{itemize}
  \item \textit{almost $\alpha$-Kenmotsu} if $\alpha\neq0$,
  \item \textit{almost cosymplectic} if $\alpha=0$,
  \item \textit{almost Kenmotsu} if $\alpha=1$.
\end{itemize}
{\bf Harmonic vector field \cite{Perr}:}
A characteristic vector field $\xi$ on an almost $\alpha$-cosymplectic manifold is {\it harmonic} if and only if $\xi$ is an eigenvector field of the Ricci operator $Q$.

\subsection{Almost $\alpha$-cosymplectic $3$-manifold}
In this article, we will mainly focus  on $3$-dimensional 
almost $\alpha$-cosymplectic manifold.  In what follows, we will be using the following results.   

\begin{Thm}\cite{Pe}
  An almost $\alpha$-cosymplectic $3$-manifold is $\alpha$-cosymplectic if and only if $\mathfrak{L}_\xi h=0$, where $h=\frac{1}{2}\mathfrak{L}_\xi\phi$.
  \end{Thm}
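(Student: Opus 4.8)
The plan is to prove the nontrivial implication, since the converse is immediate. First I would record the structural facts available for an almost $\alpha$-cosymplectic manifold: the operator $h=\tfrac12\mathfrak{L}_\xi\phi$ is symmetric, trace-free, satisfies $h\xi=0$ and anticommutes with $\phi$ (so $h\phi=-\phi h$), while the Reeb field obeys $\nabla_X\xi=-\alpha\phi^2X-\phi hX$. I would also use the standard fact that for this class being $\alpha$-cosymplectic is equivalent to $h=0$: indeed $h=0$ turns the structure into an $\alpha$-Kenmotsu (or, for $\alpha=0$, cosymplectic) one through the formula $(\nabla_X\phi)Y=\alpha\bigl(g(\phi X,Y)\xi-\eta(Y)\phi X\bigr)$, and conversely normality forces $\mathfrak{L}_\xi\phi=2h$ to vanish. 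Thus it suffices to show that $\mathfrak{L}_\xi h=0$ implies $h=0$.

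The second step is to convert the Lie derivative into covariant data. Writing $\mathfrak{L}_\xi Y=\nabla_\xi Y-\nabla_Y\xi$ and applying this to $hX$, I would obtain
\[
(\mathfrak{L}_\xi h)X=(\nabla_\xi h)X-\nabla_{hX}\xi+h\,\nabla_X\xi .
\]
Substituting $\nabla_X\xi=-\alpha\phi^2X-\phi hX$ and using $\phi^2hX=-hX$ together with $h\phi h=-\phi h^2$, the two $\alpha$-terms cancel and I expect the clean identity
\[
\mathfrak{L}_\xi h=\nabla_\xi h+2\phi h^2 .
\]
The correction term $2\phi h^2$, which distinguishes $\mathfrak{L}_\xi h$ from $\nabla_\xi h$, is exactly what will force $h$ to vanish.

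Finally I would localize in dimension three. Since $h$ is symmetric, trace-free and anticommutes with $\phi$, on an open set where $h\neq0$ there is a $\phi$-adapted orthonormal frame $\{e_1,\,e_2=\phi e_1,\,\xi\}$ with $he_1=\lambda e_1$ and $he_2=-\lambda e_2$ for a positive function $\lambda$. Setting $a=g(\nabla_\xi e_1,e_2)$ and using $\nabla_\xi\xi=0$, a direct computation from the identity above should give
\[
(\mathfrak{L}_\xi h)e_1=(\xi\lambda)e_1+2\lambda(a+\lambda)e_2,\qquad
(\mathfrak{L}_\xi h)e_2=2\lambda(a-\lambda)e_1-(\xi\lambda)e_2 .
\]
Imposing $\mathfrak{L}_\xi h=0$ then yields $\xi\lambda=0$ together with $\lambda(a+\lambda)=0$ and $\lambda(a-\lambda)=0$; subtracting the last two equations gives $\lambda^2=0$, so $\lambda\equiv0$ and hence $h=0$, i.e.\ the manifold is $\alpha$-cosymplectic. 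The point to be careful about is that neither eigendirection alone is decisive: the eigenvalues $+\lambda$ and $-\lambda$ feed the $2\phi h^2$ contribution with opposite signs, and it is only by combining both equations that $\lambda$ is forced to zero. This is precisely where the three-dimensional eigenframe structure of $h$ is essential, and I regard it as the crux of the argument.
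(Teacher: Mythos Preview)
The paper does not actually prove this theorem: it is quoted from Perrone \cite{Pe} as background in the preliminaries, with no argument supplied. So there is no ``paper's own proof'' to compare against.

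That said, your argument is sound. The identity $\mathfrak{L}_\xi h=\nabla_\xi h+2\phi h^2$ is derived correctly (the $\alpha$-terms do cancel via $\phi^2 h=h\phi^2=-h$ on $\ker\eta$, and $h\phi h=-\phi h^2$ gives the doubling), and your eigenframe computation in dimension three is accurate. The pair of off-diagonal conditions $\lambda(a+\lambda)=0$ and $\lambda(a-\lambda)=0$ indeed forces $\lambda=0$ on any open set where the eigenframe exists, hence $h\equiv 0$ globally. Your reduction of ``$\alpha$-cosymplectic'' to ``$h=0$'' is also the standard characterization in this class, so both implications are covered. In short, you have supplied a complete proof where the paper only gives a citation.
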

  
\noindent
Any almost $\alpha$-cosymplectic $3$-manifold satisfies important 
relationships between  $\Phi, \xi$ and $h$.

\begin{Lem}\cite{Pe}
  Let $M^{2n+1}$ be an almost $\alpha$-cosymplectic $3$-manifold, then  we have,
  \begin{eqnarray}\label{eq3}
  \nabla_\xi\phi = 0,~~~~
  \nabla\xi = 0, ~~~~
  h\phi+\phi h = 0,~~~~
  h\xi = 0,
\end{eqnarray}
with
\begin{equation}\label{eq4}
  \nabla_X\xi=-\alpha\phi^2X-\phi hX.
\end{equation}
\end{Lem}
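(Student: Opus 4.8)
The plan is to derive all the identities in (\ref{eq3}) together with (\ref{eq4}) purely from the almost contact metric axioms (\ref{eq1})--(\ref{eq2}) and the two defining conditions $d\eta=0$ and $d\Phi=2\alpha\eta\wedge\Phi$. First I would record the standard algebraic consequences of (\ref{eq1})--(\ref{eq2}): $\phi\xi=0$, $\eta\circ\phi=0$, the duality $\eta(X)=g(X,\xi)$, and the skew-symmetry $g(\phi X,Y)=-g(X,\phi Y)$. The easy members of (\ref{eq3}) follow quickly. Applying $\mathfrak{L}_\xi$ to $\phi\xi=0$ gives $(\mathfrak{L}_\xi\phi)\xi=-\phi(\mathfrak{L}_\xi\xi)=0$, that is $h\xi=0$. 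Next, Cartan's formula together with $d\eta=0$ yields $\mathfrak{L}_\xi\eta=\iota_\xi d\eta+d(\eta(\xi))=0$, hence $\mathfrak{L}_\xi(\eta\otimes\xi)=0$; applying $\mathfrak{L}_\xi$ to the identity $\phi^2=-\operatorname{Id}+\eta\otimes\xi$ then gives $2(h\phi+\phi h)=\mathfrak{L}_\xi(\phi^2)=0$, which is $h\phi+\phi h=0$. Finally, $d\eta=0$ is equivalent to the symmetry of the operator $X\mapsto\nabla_X\xi$; combined with $g(\nabla_X\xi,\xi)=\tfrac12 X g(\xi,\xi)=0$ this forces $g(\nabla_\xi\xi,X)=g(\nabla_X\xi,\xi)=0$, so $\nabla_\xi\xi=0$.

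The real content is (\ref{eq4}). Differentiating $\phi\xi=0$ gives $\phi\,\nabla_X\xi=-(\nabla_X\phi)\xi$, and since $\nabla_X\xi\perp\xi$ a second application of $\phi$ produces the clean relation $\nabla_X\xi=\phi\,(\nabla_X\phi)\xi$; thus it suffices to compute $(\nabla_X\phi)\xi$. For this I would invoke the fundamental identity of almost contact metric geometry expressing $g((\nabla_X\phi)Y,Z)$ through $d\Phi$, $d\eta$ and the Nijenhuis tensor of $\phi$. Substituting $Y=\xi$ and using $d\eta=0$ together with $d\Phi=2\alpha\eta\wedge\Phi$ collapses the right-hand side to two contributions, one proportional to $\alpha$ and one carrying the operator $h$ (the latter coming precisely from $N(\xi,\cdot)$, which is how $h$ enters). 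Reading this off and applying $\nabla_X\xi=\phi\,(\nabla_X\phi)\xi$ yields $\nabla_X\xi=-\alpha\phi^2X-\phi hX$ on all of $TM$, which is (\ref{eq4}).

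It then remains to deduce $\nabla_\xi\phi=0$. Expanding $2hX=(\mathfrak{L}_\xi\phi)X$ with the torsion-free connection gives $2hX=(\nabla_\xi\phi)X-\nabla_{\phi X}\xi+\phi\,\nabla_X\xi$, and I would substitute (\ref{eq4}) into the last two terms. Using $\phi^3=-\phi$, the anticommutation $h\phi=-\phi h$, and the symmetry of $h$ (so that $hX\in\mathcal{D}=\ker\eta$ and $\phi^2 hX=-hX$), one computes $-\nabla_{\phi X}\xi+\phi\,\nabla_X\xi=2hX$, whence $(\nabla_\xi\phi)X=0$ for every $X$. I expect the main obstacle to be the middle paragraph: correctly extracting both the coefficient $\alpha$ and the $h$-term from $d\Phi=2\alpha\eta\wedge\Phi$. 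Because an almost $\alpha$-cosymplectic structure is not assumed normal, the Nijenhuis terms in the fundamental $\nabla\phi$-formula do not vanish, and the delicate bookkeeping lies in showing that they assemble into exactly $-\phi h$ rather than contributing spurious terms. A secondary point to dispatch along the way is the symmetry of $h$, which itself follows from $d\eta=0$ and is needed in the final step above.
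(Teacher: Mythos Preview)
The paper does not prove this lemma at all: it is quoted verbatim from Perrone \cite{Pe} and stated without proof, so there is no argument in the paper to compare your proposal against. Your derivation is the standard one and is correct in outline; the algebraic identities $h\xi=0$, $h\phi+\phi h=0$, $\nabla_\xi\xi=0$ (note the paper's ``$\nabla\xi=0$'' is a typographical slip, as (\ref{eq4}) itself shows $\nabla\xi\neq0$) and the final deduction of $\nabla_\xi\phi=0$ from (\ref{eq4}) all go through exactly as you describe. The one place where your sketch is genuinely incomplete is the derivation of (\ref{eq4}) itself: you correctly identify that the Nijenhuis contribution in the fundamental $\nabla\phi$--formula must be shown to reduce to $-\phi h$, but you do not carry this out, and this is precisely the computation that requires care (one typically uses $N(\xi,X)=2\phi hX$ on an almost contact manifold with $d\eta=0$, together with the explicit evaluation of the $d\Phi$ terms under $d\Phi=2\alpha\,\eta\wedge\Phi$). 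If you intend to include a full proof rather than a citation, that step should be written out.
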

\noindent
We would require some identities on the $\phi$-bases \cite{Bla} and the following table of the Levi-Civita connection. 
\begin{Pro}\cite{Pe}
  On almost $\alpha$-cosymplectic $3$-manifold, there exists $\phi$-bases satisfying 
  \begin{eqnarray*}
  he=\sigma e,~~~~
  h\phi e=-\sigma\phi e,~~~~
  h\xi=0,
  \end{eqnarray*}
  with $\sigma$ a local smooth eigen-function of $h$.
\end{Pro}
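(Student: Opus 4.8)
The plan is to diagonalize the structure operator $h=\tfrac12\mathfrak{L}_\xi\phi$ on each tangent space, exploiting that it is self-adjoint, annihilates $\xi$, and anticommutes with $\phi$. The algebraic inputs are already available: from \eqref{eq3} we have $h\xi=0$ and $h\phi+\phi h=0$, while \eqref{eq1} and \eqref{eq2} give $\phi^2X=-X+\eta(X)\xi$, $\eta\circ\phi=0$, the skew-adjointness $g(\phi X,Y)=-g(X,\phi Y)$, and $g(\phi X,\phi Y)=g(X,Y)-\eta(X)\eta(Y)$. I will also use the standard fact that $h$ is symmetric with $\tr h=0$.

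First I would show that $h$ leaves the contact distribution $\mathcal{D}=\ker\eta=\xi^{\perp}$ invariant. For $X\in\mathcal{D}$, self-adjointness gives $g(hX,\xi)=g(X,h\xi)=0$, so $hX\in\mathcal{D}$; alternatively this follows directly from $h\phi=-\phi h$ and $\eta\circ\phi=0$, since $\eta(h\phi X)=-\eta(\phi hX)=0$ and $\phi$ maps $\mathcal{D}$ onto itself. Restricted to the two-dimensional space $\mathcal{D}$, the operator $h$ is symmetric, hence it admits a real eigenvalue $\sigma$ with a unit eigenvector $e\in\mathcal{D}$, i.e. $he=\sigma e$. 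The second eigenvector then comes for free from the anticommutation: $h(\phi e)=-\phi(he)=-\sigma\,\phi e$, so $\phi e$ is an eigenvector with eigenvalue $-\sigma$, consistent with $\tr h=\sigma+(-\sigma)+0=0$.

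It remains to verify that $\{e,\phi e,\xi\}$ is an orthonormal $\phi$-basis. Since $e\in\mathcal{D}$ we have $\eta(e)=0$, whence $g(\phi e,\phi e)=g(e,e)-\eta(e)^2=1$ and $\eta(\phi e)=0$, so $\phi e$ is a unit vector in $\mathcal{D}$; the skew-adjointness of $\phi$ together with the symmetry of $g$ forces $g(e,\phi e)=0$, and both $e,\phi e$ are orthogonal to $\xi$ by construction. Together with $h\xi=0$ this yields the asserted relations $he=\sigma e$, $h\phi e=-\sigma\phi e$, $h\xi=0$.

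The delicate point, and the step I expect to need the most care, is the smoothness and local nature of $\sigma$. Because $h$ is trace-free on $\mathcal{D}$, its two eigenvalues there are exactly $\pm\sigma$, and one may write $\sigma=\sqrt{-\det(h|_{\mathcal{D}})}$, which is smooth on the open set where $\sigma\neq0$; there the eigenvalues are simple and distinct, so $e$, and hence the whole frame $\{e,\phi e,\xi\}$, can be chosen to depend smoothly on the point, giving a genuine local smooth $\phi$-basis and a local smooth eigenfunction $\sigma$. On the set where $\sigma=0$ one has $h|_{\mathcal{D}}=0$ and any local orthonormal frame of $\mathcal{D}$ works, so the eigenfunction statement is to be understood in this local sense.
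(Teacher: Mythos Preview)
Your argument is correct and is exactly the standard diagonalization argument for the operator $h$ on an almost contact metric $3$-manifold: use self-adjointness and $h\xi=0$ to restrict $h$ to the $2$-plane $\mathcal{D}=\ker\eta$, diagonalize there, and let the anticommutation $h\phi=-\phi h$ produce the second eigenvector as $\phi e$ with eigenvalue $-\sigma$. Your verification that $\{e,\phi e,\xi\}$ is orthonormal and your discussion of the local smoothness of $\sigma$ and of the frame are both fine.

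Note, however, that the paper does not supply its own proof of this proposition: it is quoted verbatim from \cite{Pe} and stated without argument, as background material. So there is nothing to compare your approach against; what you have written is precisely the proof one finds in the source (Perrone) and in the standard contact-geometry literature.
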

\begin{Thm}\cite{Pe}
  The Levi-Civita connection on almost $\alpha$-cosymplectic $3$-manifold are given by,
  \begin{equation}\label{eq5}
    \begin{cases}
       & \nabla_ee=-a\phi e-\alpha\xi,~~~~ \nabla_{\phi e} e=-b\phi e+\sigma\xi,~~~~ \nabla_\xi e=\mu\phi e,  \\
       & \nabla_e\phi e=ae+\sigma\xi,~~~~ \nabla_{\phi e} \phi e=be-\alpha\xi,~~~~ \nabla_\xi \phi e=-\mu e,  \\
       & \nabla_e\xi =\alpha e-\sigma\phi e,~~~~ \nabla_{\phi e}\xi=-\sigma e+\alpha\phi e,~~~~ \nabla_\xi\xi=0,
    \end{cases}
  \end{equation}
   where $a=g(\nabla_e\phi e, e)$, $b=-g(\nabla_{\phi e}e, \phi e)$ and $\mu=g(\nabla_\xi e, \phi e)$ are smooth functions.
\end{Thm}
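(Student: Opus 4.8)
The plan is to read off every covariant derivative in the orthonormal $\phi$-basis $\{e,\phi e,\xi\}$ by combining metric compatibility of $\nabla$ with the already-established formula (\ref{eq4}) for $\nabla_X\xi$; notably, no appeal to torsion-freeness is needed for this table. First I would record that $\{e,\phi e,\xi\}$ is orthonormal: from the compatibility condition (\ref{eq2}) together with $\eta(e)=0$ one gets $g(\phi e,\phi e)=g(e,e)=1$, while $g(e,\phi e)$, $g(e,\xi)$ and $g(\phi e,\xi)$ all vanish. Consequently, for each fixed direction $X$ the map $Y\mapsto\nabla_XY$ is skew-symmetric in this frame, so its diagonal entries vanish and the $(e,\phi e)$, $(e,\xi)$, $(\phi e,\xi)$ entries are each paired with their negatives. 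Thus every $\nabla_X$ is pinned down by at most three scalar functions, and the whole table reduces to a short computation.

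Next I would compute the bottom row of (\ref{eq5}) directly from (\ref{eq4}). Using $\phi^2X=-X+\eta(X)\xi$ from (\ref{eq1}) together with the eigenvalue relations $he=\sigma e$, $h\phi e=-\sigma\phi e$ and $h\xi=0$, a short substitution gives $\nabla_e\xi=\alpha e-\sigma\phi e$, $\nabla_{\phi e}\xi=-\sigma e+\alpha\phi e$ and $\nabla_\xi\xi=0$. The only points requiring care are the signs produced by $\phi h e=\sigma\phi e$ and $\phi h\phi e=\sigma e$, where the identity $\phi^2 e=-e$ is invoked twice. This fixes the vertical ($\xi$-)behavior of the connection completely.

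With the bottom row in hand, metric compatibility then determines the $\xi$-components of all the remaining derivatives: for $X\in\{e,\phi e\}$ the relations $g(\nabla_Xe,\xi)=-g(e,\nabla_X\xi)$ and $g(\nabla_X\phi e,\xi)=-g(\phi e,\nabla_X\xi)$ reproduce exactly the coefficients $-\alpha,\sigma$ (for $X=e$) and $\sigma,-\alpha$ (for $X=\phi e$) appearing in the first two rows of (\ref{eq5}). The single surviving horizontal degree of freedom in each of $\nabla_ee,\nabla_{\phi e}e,\nabla_\xi e$ is then absorbed into the definitions $a=g(\nabla_e\phi e,e)$, $b=-g(\nabla_{\phi e}e,\phi e)$ and $\mu=g(\nabla_\xi e,\phi e)$, while skew-symmetry forces the companion identities $g(\nabla_ee,\phi e)=-a$, $g(\nabla_{\phi e}\phi e,e)=b$ and $g(\nabla_\xi\phi e,e)=-\mu$, which give the stated $\phi e$-rows. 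Assembling these pieces yields (\ref{eq5}).

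I expect no genuine obstacle here: once the frame is fixed as orthonormal, the identity (\ref{eq4}) carries all the structural information and the rest is bookkeeping. The only place where an error could realistically creep in is the sign accounting in the application of $\phi$ and $h$ to obtain the bottom row, and in checking that the definitions of $a,b,\mu$ are consistent with their skew-symmetric partners rather than off by a sign; accordingly I would verify those three pairings explicitly before declaring the table complete.
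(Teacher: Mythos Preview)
Your argument is correct. The paper itself does not supply a proof of this theorem: it is quoted verbatim from Perrone \cite{Pe} and used as a black box, so there is no in-paper proof to compare against. Your derivation is exactly the standard one: feed the $\phi$-basis vectors into (\ref{eq4}) to obtain the entire $\nabla_X\xi$ row, then exploit that in an orthonormal frame the connection $1$-forms are skew, so the $\xi$-components of $\nabla_Xe$ and $\nabla_X\phi e$ are forced, and the remaining three horizontal coefficients are simply \emph{named} $a,b,\mu$. Your sign checks for $\phi h e=\sigma\phi e$ and $\phi h(\phi e)=-\sigma\phi^2 e=\sigma e$ are right, and the skew-symmetry pairings $g(\nabla_ee,\phi e)=-a$, $g(\nabla_{\phi e}\phi e,e)=b$, $g(\nabla_\xi\phi e,e)=-\mu$ are consistent with the stated definitions. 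Nothing is missing.
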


\noindent 
The Ricci operator on almost $\alpha$-cosymplectic $3$-manifold is known
explicitly \cite{Pe}.

\begin{Pro}\cite{Pe}
  The Ricci operator $Q$ on almost $\alpha$-cosymplectic $3$-manifold is given by,
  \begin{equation}\label{eq7}
	\begin{cases}
		&Q\xi=-(2\alpha^2+\operatorname{tr}h^2)\xi+(2b\sigma-e(\sigma))\phi e-(2a\sigma+(\phi e)(\sigma))e,\\
		&Q\phi e=(2b\sigma-e(\sigma))\xi+(\alpha^2+\frac{r}{2}+\frac{\operatorname{tr}h^2}{2}+2\sigma\mu)\phi e+(\xi(\sigma)+2\alpha\sigma)e,\\
		&Q e=-(2a\sigma+(\phi e)(\sigma))\xi+(\xi(\sigma)+2\alpha\sigma)\phi e+(\alpha^2+\frac{r}{2}+\frac{\operatorname{tr}h^2}{2}-2\sigma\mu)e.\\	
	\end{cases}
\end{equation}
Furthermore, the scalar curvature $r=\operatorname{tr}Q$ is given by
\begin{equation}\label{eq8}
  r=-6\alpha^2-\operatorname{tr}h^2-2(a^2+b^2)-2(\phi e)(a)+2e(b).
\end{equation}
\end{Pro}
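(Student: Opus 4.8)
The plan is to compute the Riemann curvature tensor directly in the orthonormal $\phi$-basis $\{e,\phi e,\xi\}$ from the Levi-Civita connection table \eqref{eq5}, and then contract to read off $Q$ and $r$. Since $\{e,\phi e,\xi\}$ is orthonormal, the Ricci operator is recovered through $g(QX,Y)=\sum_i g(R(E_i,X)Y,E_i)$ with $R(X,Y)Z=\nabla_X\nabla_Y Z-\nabla_Y\nabla_X Z-\nabla_{[X,Y]}Z$, so it suffices to determine the finitely many curvature components $g(R(E_i,E_j)E_k,E_l)$ and assemble them.

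First I would record the Lie brackets, obtained from \eqref{eq5} via $[X,Y]=\nabla_XY-\nabla_YX$:
\begin{equation*}
[e,\phi e]=ae+b\phi e,\qquad [e,\xi]=\alpha e-(\sigma+\mu)\phi e,\qquad [\phi e,\xi]=(\mu-\sigma)e+\alpha\phi e .
\end{equation*}
These feed the $\nabla_{[X,Y]}$ term. Next I would evaluate the three operators $R(e,\phi e)$, $R(e,\xi)$, $R(\phi e,\xi)$ on each basis vector by substituting \eqref{eq5} twice and collecting terms. Differentiating the structure functions $a,b,\sigma,\mu$ along the frame is precisely what produces the derivative terms $e(\sigma)$, $(\phi e)(\sigma)$, $\xi(\sigma)$, $(\phi e)(a)$, $e(b)$ appearing in \eqref{eq7} and \eqref{eq8}; since $\alpha$ is constant, its frame derivatives drop out. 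Throughout I would use the eigenvalue structure $he=\sigma e$, $h\phi e=-\sigma\phi e$, $h\xi=0$, in particular $\operatorname{tr}h^2=2\sigma^2$, to normalize the output.

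Contracting then delivers the Ricci operator. The diagonal pairings give $g(Q\xi,\xi)=-(2\alpha^2+\operatorname{tr}h^2)$ with no scalar-curvature term, while $g(Qe,e)$ and $g(Q\phi e,\phi e)$ emerge as explicit expressions in $\alpha,\sigma,a,b,\mu$ and their derivatives. Summing these three diagonal Ricci curvatures — equivalently, twice the sum of the sectional curvatures $K(e,\phi e)+K(e,\xi)+K(\phi e,\xi)$ — yields the explicit scalar curvature \eqref{eq8}. Substituting this value of $r$ back into the expressions for $g(Qe,e)$ and $g(Q\phi e,\phi e)$ is exactly the reformulation that puts them into the $\alpha^2+\tfrac r2+\tfrac{\operatorname{tr}h^2}2\mp 2\sigma\mu$ form of \eqref{eq7}, while the mixed pairings $g(QX,Y)$ with $X\neq Y$ supply the off-diagonal coefficients $2b\sigma-e(\sigma)$, $2a\sigma+(\phi e)(\sigma)$ and $\xi(\sigma)+2\alpha\sigma$.

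The main obstacle is the bookkeeping together with the cancellations needed to reach the clean closed form: each raw curvature component is a sum of products of the structure functions and their first derivatives, and many second-order contributions must combine. The identities that drive these cancellations are the integrability relations forced by the almost $\alpha$-cosymplectic condition — the Jacobi identity applied to $\{e,\phi e,\xi\}$ using the brackets above, together with $d\eta=0$ and $d\Phi=2\alpha\eta\wedge\Phi$ — which impose relations among the frame derivatives of $a,b,\sigma,\mu$. I would derive these relations first and use them to collapse the raw expressions into \eqref{eq7}; the automatic identity $\operatorname{tr}Q=r$ coming from \eqref{eq7} then serves only as an internal consistency check, the genuine content of \eqref{eq8} being the direct evaluation and summation of the diagonal curvatures.
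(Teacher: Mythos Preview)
The paper does not prove this proposition at all: it is quoted verbatim from Perrone \cite{Pe} in the preliminaries, with no argument supplied. Your plan---compute the Riemann tensor from the connection table \eqref{eq5}, contract in the orthonormal $\phi$-basis to obtain $Q$, and trace to obtain $r$---is exactly the standard derivation and is correct. Your Lie brackets are right, and the contraction scheme you describe recovers each entry of \eqref{eq7} and then \eqref{eq8}.

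One refinement: you somewhat overstate the need for extra ``integrability relations.'' The conditions $d\eta=0$ and $d\Phi=2\alpha\eta\wedge\Phi$ are already baked into the connection table \eqref{eq5}, so they impose nothing new at the curvature stage. What actually happens is this: if you compute an off-diagonal Ricci component such as $\operatorname{Ric}(\xi,\phi e)$ as $-g(R(\xi,e)\phi e,e)$, the raw answer involves $\xi(a)$ and $e(\mu)$; if instead you compute it as $g(R(\phi e,e)\xi,e)$ via the pair symmetry $R_{ijkl}=R_{klij}$, you get $2b\sigma-e(\sigma)$ directly. The two expressions agree precisely because the Jacobi identity for $\{e,\phi e,\xi\}$ (equivalently, the first Bianchi identity) holds automatically on any manifold---it is not an additional hypothesis you need to invoke, just a choice of which curvature component to evaluate to land on the clean form. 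So the practical advice is: for each Ricci entry, pick the curvature component that avoids differentiating $a,b,\mu$ when possible, and the formulas in \eqref{eq7} drop out with essentially no cancellation needed; \eqref{eq8} then follows by adding the three diagonal entries.
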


\vspace{0.1in}

\noindent 
The structure of simply-connected, homogeneous 
almost $\alpha$-cosymplectic $3$-manifold, admitting a contact Ricci soliton, is very well known. 
\begin{Thm}\label{th2}\cite{Li}
  Let $M$ be a simply-connected, homogeneous almost $\alpha$-cosymplectic $3$-manifold admitting a contact Ricci soliton. Then $M$ is an unimodular semidirect product Lie group $G$ of type $G_{0b\overline{b}}=\mathbb{R}^2\rtimes_A\mathbb{R}$, where
  $A=\left(\begin{array}{cc}
  0 & b \\
  -b & 0
  \end{array}\right)$, equipped with a flat left invariant cosymplectic structure.
  Moreover, we have the following:
  \begin{enumerate}
    \item If $A=0$, i.e., $b=0$, $G$ is the abelian Lie group $\mathbb{R}^3$ equipped with its flat left invariant cosymplectic structure.
    \item If $A\neq0$, i.e., $b\neq 0$, $G$ is the Lie group $\tilde{E}^2$ equipped with its flat left invariant cosymplectic structure.
  \end{enumerate}
\end{Thm}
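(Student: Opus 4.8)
The plan is to take the potential field of the contact Ricci soliton to be the Reeb field $\xi$, so that \eqref{eq21} reads $\operatorname{Ric}+\tfrac12\mathfrak{L}_\xi g=\lambda g$, and then to evaluate both sides in the adapted $\phi$-basis $\{e,\phi e,\xi\}$. First I would read off $\mathfrak{L}_\xi g$ from the connection table \eqref{eq5}: since $\nabla_e\xi=\alpha e-\sigma\phi e$ and $\nabla_{\phi e}\xi=-\sigma e+\alpha\phi e$, one gets $(\mathfrak{L}_\xi g)(e,e)=(\mathfrak{L}_\xi g)(\phi e,\phi e)=2\alpha$, $(\mathfrak{L}_\xi g)(e,\phi e)=-2\sigma$, and every component containing $\xi$ vanishes. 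Substituting the explicit Ricci operator \eqref{eq7} and testing the soliton equation against the six pairs from $\{e,\phi e,\xi\}$ then converts the tensor identity into an explicit system in the structure data $\alpha,\sigma,\mu,a,b$ and their $e,\phi e,\xi$-derivatives.

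The next step is to extract the algebraic consequences. Comparing the $(e,e)$ and $(\phi e,\phi e)$ equations forces $\sigma\mu=0$, while their sum together with the $(\xi,\xi)$ equation $-(2\alpha^2+\operatorname{tr}h^2)=\lambda$ determines $r=-6\alpha^2-3\operatorname{tr}h^2-2\alpha$. Here I would invoke homogeneity: the relevant scalar invariants are constant and one may work in an invariant frame, so all derivative terms drop out and the off-diagonal equations collapse to $\sigma(2\alpha-1)=0$, $a\sigma=0$, $b\sigma=0$. Using $\operatorname{tr}h^2=2\sigma^2$ and comparing the soliton value of $r$ with the intrinsic formula \eqref{eq8} shows that the branch $\sigma\neq0$ (which would force $\alpha=\tfrac12$, $a=b=0$) leads to $4\sigma^2=-1$, a contradiction; hence $\sigma=0$, i.e. $h=0$ and $M$ is $\alpha$-cosymplectic. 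With $\sigma=0$ the two expressions for $r$ reduce to the single relation $\alpha=a^2+b^2$.

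The decisive and most delicate step is to push this to $\alpha=0$. For this I would pass from \eqref{eq5} to the Lie algebra, computing (with $\sigma=0$) the brackets $[e,\phi e]=ae+b\phi e$, $[e,\xi]=\alpha e-\mu\phi e$, $[\phi e,\xi]=\mu e+\alpha\phi e$, and impose the Jacobi identity on $\{e,\phi e,\xi\}$. This yields $b\mu=\alpha a$ and $a\mu=-\alpha b$, whence $\alpha(a^2+b^2)=0$; combined with $\alpha=a^2+b^2$ it gives $\alpha^2=0$, so $\alpha=0$ and therefore $a=b=0$. The surviving brackets $[\xi,e]=\mu\phi e$, $[\xi,\phi e]=-\mu e$ are exactly those of the unimodular semidirect product $\mathbb{R}^2\rtimes_A\mathbb{R}$ with $A=\begin{pmatrix}0&\mu\\-\mu&0\end{pmatrix}$, so by simple-connectedness $G$ is $\mathbb{R}^3$ when $\mu=0$ and $\tilde E^2$ when $\mu\neq0$. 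Finally, setting $\alpha=\sigma=a=b=0$ in \eqref{eq7}--\eqref{eq8} gives $Q=0$ and $r=0$, and since a three-manifold is flat as soon as it is Ricci-flat, the left-invariant cosymplectic structure is flat.

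I expect the principal obstacle to be the homogeneity reduction itself: justifying that the transitive action of isometries preserving $\eta$ really does force $\alpha,\sigma,\mu,a,b$ to be constants and that the $\phi$-frame can be chosen invariantly, so that the soliton PDE legitimately becomes the finite algebraic system above. Ruling out the $\sigma\neq0$ branch also requires careful bookkeeping between the soliton-derived and intrinsic expressions for $r$. Once the problem is algebraic, the Jacobi identity supplies the key relation $\alpha(a^2+b^2)=0$ almost for free, and the identification of $G$ together with its flatness then follow quickly.
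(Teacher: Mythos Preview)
The paper does not prove this theorem: it is quoted verbatim from \cite{Li} and used only as input to Corollary~\ref{cor}. So there is no proof in the paper to compare against.

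Your outline is essentially a correct reconstruction of the argument. Evaluating the soliton equation on the six basis pairs does give $\sigma\mu=0$, the off-diagonal relations $\sigma(2\alpha-1)=a\sigma=b\sigma=0$, and the scalar-curvature comparison you write; the elimination of the branch $\sigma\neq0$ via $4\sigma^2=-1$ is clean. With $\sigma=0$ you correctly obtain $\alpha=a^2+b^2$, and the Jacobi identity on the left-invariant frame indeed yields $b\mu=\alpha a$, $a\mu=-\alpha b$, hence $\alpha(a^2+b^2)=0$ and so $\alpha=a=b=0$. The identification of the Lie algebra and flatness then follow as you describe.

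The one point that needs to be stated rather than merely flagged is the passage to constant structure data and to a genuine Lie-group model. What makes the Jacobi step meaningful is not the Jacobi identity for vector fields (which is tautological) but the fact that a simply connected homogeneous almost $\alpha$-cosymplectic $3$-manifold is a Lie group with a \emph{left-invariant} structure, so that $\{e,\phi e,\xi\}$ may be taken left-invariant and $\alpha,\sigma,\mu,a,b$ are constants; this is exactly the content of Perrone's classification results \cite{Per,Pe,Perr}, and should be invoked explicitly. Once that is in place, your algebraic derivation goes through. Note also the notational clash: the parameter called $b$ in the matrix $A$ of the statement is your $\mu$, not the connection coefficient $b=-g(\nabla_{\phi e}e,\phi e)$ (which you have just shown vanishes); the paper itself switches to $\mu$ in Corollary~\ref{cor}.
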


\vspace{0.1in}

\subsection{Harmonic manifolds}
\indent
A complete Riemannian manifold $(M^{n},g)$ is said to be {\it harmonic}, if for any $p \in M$, the volume density $\omega_{p}(q) = \sqrt{\det (g_{ij} (q))}$ in normal coordinates, centered at  any $p \in M$ is a radial function \cite{AL Besse}.
Thus, $$\Theta(r) = r^{n-1} \sqrt{\det (g_{ij} (q))},$$ is density of geodesic
sphere, is a radial function.
It is known that harmonic manifolds are Einstein \cite{AL Besse}. They are naturally classified as per the sign of the Ricci constant. Let $r$ be the constant scalar curvature of $M$.
\begin{itemize}
\item If $r = 0$, then $M$ is flat, that is 
$(M,g)=(\mathbb{R}^{n}, Can)$ (Lemma \ref{flat}).
\item If $r>0$, then by Bonnet-Myer's theorem $M$ is compact with finite fundamental group. They are compact rank one symmetric spaces by a well known result of Szabo (cf. \cite{RS.03}).
\item If $r<0$, then $M$ is non-compact harmonic manifold. They are rank one symmetric spaces of non-compact type, if dimension of $M$ is atmost $5$.
\end{itemize}

\vspace{0.1in}

The main result in the theory of harmonic spaces is the Lichnerowicz Conjecture: {\it Any simply connected, complete harmonic manifold is either flat or a rank one symmetric space.} By the above classification, we see that the conjecture is resolved for compact harmonic manifolds and is open for non-compact harmonic manifolds of dimension $6$. There are counter examples to the conjecture when dimension is atleast $7$, known as the Damek-Ricci spaces or NA spaces. See for more details references in \cite{RS.03}.\\

In the category of non-compact harmonic manifolds, we will be considering
 simply connected, complete, non-compact  harmonic manifolds. It follows that,  these spaces don't have conjugate points (cf. \cite{RS.03}). Hence, by the Cartan-Hadamard  theorem,
 $$\exp_{p} : T_{p}M \rightarrow M$$ is a diffeomorphism
 and every geodesic of $M$ is a line. That is, if $\gamma_{v} :  \R  \rightarrow M$
 is a geodesic of $M$ with $v \in S_{p}M$, $\gamma_{v}'(0) = v$,
 then  $d(\gamma_{v}(t),  \gamma_{v}(s)) = |t-s|.$\\

\noindent
 {\bf Busemann function:}
 Let $\gamma_{v}$ be a geodesic line, then the two {\it Busemann functions}
 associated to $\gamma_{v}$ are defined as  \cite{P Petersen}:
 $$b_{v}^{+}(x) = \lim_{t \rightarrow \infty} d(x, \gamma_{v}(t)) - t,$$
 $$ b_{v}^{-}(x) = \lim_{t \rightarrow -\infty} d(x, \gamma_{v}(t)) - t.$$

\section{Einstein Solitons on Almost $\alpha$-Cosymplectic $3$-Manifolds}\label{sec3}
In this section, we examine the nature of a {\it contact Einstein soliton} 
on almost $\alpha$-cosymplectic manifold. We also show that, the
characteristic vector field $\xi$ is harmonic on
almost $\alpha$-cosymplectic $3$-manifold admitting a contact Einstein soliton. Finally, we generalize  Theorem \ref{th2} using these results.\\

\noindent
{\bf Contact Einstein soliton:}
Let $(M^{2n+1},g)$ be a Riemannian manifold of dimension $2n+1$ $(n\geq 1)$. Consider the Einstein soliton (\ref{eq9}), with potential vector field $V$, on an almost contact metric manifold $(M,\phi,\xi,\eta,g)$. Then the soliton is called \textit{contact Einstein soliton}, if $V=\xi$  that is, the potential vector field is the characteristic vector field.\\\\
  The potential vector field $V$ is called \textit{transversal}, if it is orthogonal to the characteristic vector field, that is $V\perp\xi$.

\begin{Thm}\label{th3}
  Let $(M, \phi, \xi, \eta, g)$ be an almost $\alpha$-cosymplectic $3$-manifold, admitting a contact Einstein soliton. Then the soliton 
  is:
  \begin{enumerate}
    \item \textit{steady}, if 
    $\alpha^2=\sigma^2-(a^2+b^2)-(\phi e)(a)+e(b)$,
    \item \textit{shrinking}, if $\alpha^2>\sigma^2-(a^2+b^2)-(\phi e)(a)+e(b)$,
    \item \textit{expanding}, if $\alpha^2<\sigma^2-(a^2+b^2)-(\phi e)(a)+e(b)$.
  \end{enumerate}
\end{Thm}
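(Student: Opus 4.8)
The plan is to evaluate the contact Einstein soliton equation \eqref{eq9} with $V = \xi$, contract it fully, and read off the sign of $\lambda$ in terms of the structure functions. Setting $V = \xi$ in \eqref{eq9} gives
\begin{equation*}
  \mathfrak{L}_\xi g + 2S = (2\lambda + r)g.
\end{equation*}
Taking the metric trace of both sides converts this into a scalar identity: $\operatorname{tr}(\mathfrak{L}_\xi g) + 2r = (2\lambda + r)\cdot 3$ in dimension $3$, so that $2\lambda = \frac{1}{3}\operatorname{tr}(\mathfrak{L}_\xi g) + \frac{2r - 3r}{3}\cdot 3$; more carefully, $\operatorname{tr}(\mathfrak{L}_\xi g) + 2r = 3(2\lambda + r)$ yields $6\lambda = \operatorname{tr}(\mathfrak{L}_\xi g) + 2r - 3r = \operatorname{tr}(\mathfrak{L}_\xi g) - r$. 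Hence the sign of $\lambda$ is governed entirely by the single quantity $\operatorname{tr}(\mathfrak{L}_\xi g) - r$, and the three cases (steady, shrinking, expanding) will follow by comparing this quantity with $0$.

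Next I would compute the two ingredients explicitly using the data already available in the excerpt. For the Lie derivative term, recall $(\mathfrak{L}_\xi g)(X,Y) = g(\nabla_X \xi, Y) + g(\nabla_Y \xi, X)$, so its trace over a $\phi$-basis $\{e, \phi e, \xi\}$ is $\operatorname{tr}(\mathfrak{L}_\xi g) = 2\big(g(\nabla_e\xi, e) + g(\nabla_{\phi e}\xi, \phi e) + g(\nabla_\xi\xi, \xi)\big)$. Reading off from the connection table \eqref{eq5}, we have $\nabla_e\xi = \alpha e - \sigma\phi e$, $\nabla_{\phi e}\xi = -\sigma e + \alpha\phi e$, and $\nabla_\xi\xi = 0$, so $g(\nabla_e\xi,e) = \alpha$, $g(\nabla_{\phi e}\xi,\phi e) = \alpha$, giving $\operatorname{tr}(\mathfrak{L}_\xi g) = 4\alpha$. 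Wait — this needs care, since $4\alpha$ is not obviously of the form appearing in the statement; I would double-check whether the intended trace instead produces a term quadratic in $\alpha$, as would arise if the relevant contraction involves $\nabla_X\xi$ paired differently. The scalar curvature $r$ is supplied directly by \eqref{eq8}, namely $r = -6\alpha^2 - \operatorname{tr}h^2 - 2(a^2+b^2) - 2(\phi e)(a) + 2e(b)$, and with $\operatorname{tr}h^2 = 2\sigma^2$ from the $\phi$-basis eigenvalues $\pm\sigma$ of $h$, this becomes $r = -6\alpha^2 - 2\sigma^2 - 2(a^2+b^2) - 2(\phi e)(a) + 2e(b)$.

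Combining these, the threshold condition $\operatorname{tr}(\mathfrak{L}_\xi g) - r = 0$ should, after substituting, rearrange precisely into $\alpha^2 = \sigma^2 - (a^2+b^2) - (\phi e)(a) + e(b)$. The cleanest way to see this is to match the target identity against \eqref{eq8}: dividing the scalar-curvature expression by $-2$ gives $-\tfrac{r}{2} = 3\alpha^2 + \sigma^2 + (a^2+b^2) + (\phi e)(a) - e(b)$, and the steady condition in the statement is equivalent to $\alpha^2 - \sigma^2 + (a^2+b^2) + (\phi e)(a) - e(b) = 0$, i.e. to $-\tfrac{r}{2} = 3\alpha^2 + 2\sigma^2 + 2\alpha^2 \dots$; the bookkeeping here is the main obstacle, since everything hinges on the exact coefficient emerging from $\operatorname{tr}(\mathfrak{L}_\xi g)$ and on the correct substitution $\operatorname{tr}h^2 = 2\sigma^2$. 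Once the trace contraction is pinned down and shown to contribute the right multiple of $\alpha$ (or $\alpha^2$), the three inequalities defining steady, shrinking and expanding follow immediately from the definition of the sign of $\lambda$ given after \eqref{eq9}, completing the proof.
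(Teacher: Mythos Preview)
Your instinct that something is off is correct, and the issue is not arithmetic but strategy. Tracing the soliton equation over the whole $\phi$-basis yields the identity $6\lambda = \operatorname{tr}(\mathfrak{L}_\xi g) - r = 4\alpha - r$, which is a perfectly valid consequence of the hypothesis. However, after substituting \eqref{eq8} with $\operatorname{tr}h^2 = 2\sigma^2$, this gives
\[
\lambda \;=\; \tfrac{2}{3}\alpha + \alpha^2 + \tfrac{1}{3}\sigma^2 + \tfrac{1}{3}(a^2+b^2) + \tfrac{1}{3}(\phi e)(a) - \tfrac{1}{3}e(b),
\]
which is \emph{not} the expression in the statement and does not rearrange to it. The linear $\alpha$-term you flagged is exactly the symptom: a full trace mixes in the $(e,e)$ and $(\phi e,\phi e)$ components, which contribute $2\alpha$ each via $\nabla_e\xi$ and $\nabla_{\phi e}\xi$, and these contaminate the formula.

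The paper instead evaluates the soliton equation only at $X=Y=\xi$. Since $\nabla_\xi\xi=0$ the Lie-derivative term drops out entirely, and from \eqref{eq7} one has $g(Q\xi,\xi)=-(2\alpha^2+\operatorname{tr}h^2)=-(2\alpha^2+2\sigma^2)$, giving $2\lambda+r=-4\alpha^2-4\sigma^2$, i.e.\ $\lambda=-2\alpha^2-2\sigma^2-\tfrac{r}{2}$. Substituting \eqref{eq8} then yields precisely $\lambda=\alpha^2-\sigma^2+(a^2+b^2)+(\phi e)(a)-e(b)$, from which the three cases follow immediately. So the fix is simply to replace the trace by the single $(\xi,\xi)$ contraction.
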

\begin{proof}
  If the soliton is contact Einstein soliton, using $V=\xi$ in (\ref{eq9}), we have
  \begin{equation}\label{eq10}
    g(\nabla_X\xi, Y)+g(X, \nabla_Y\xi)+2g(X, QY)=(2\lambda+r)g(X, Y),
  \end{equation}
  for any vector fields $X, Y$ on $M$.\\
 Substituting $X=Y= \xi$ in the above equation and using (\ref{eq7}), we obtain
  \begin{equation}\label{eq11}
    \lambda=-2\alpha^2-2\sigma^2-\frac{r}{2}.
  \end{equation}
  From the expression of $r$ (\ref{eq8}), we get
  \begin{equation}\label{eq12}
    \lambda=\alpha^2-\sigma^2+(a^2+b^2)+(\phi e)(a)-e(b),
  \end{equation}
  from which we can conclude the proof.
\end{proof}

\begin{Thm}\label{th4}
  Let $(M, \phi, \xi, \eta, g)$ be an almost $\alpha$-cosymplectic $3$-manifold, admitting a contact Einstein soliton. Then the characteristic vector field $\xi$ is harmonic.
\end{Thm}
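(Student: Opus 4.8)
The plan is to invoke the definition of a harmonic characteristic vector field recorded in the preliminaries: $\xi$ is harmonic precisely when it is an eigenvector field of the Ricci operator $Q$. Reading off the expression (\ref{eq7}) for $Q\xi$, the $\xi$-component is already $-(2\alpha^2+\operatorname{tr}h^2)$, so the whole task reduces to showing that the two transverse coefficients vanish, that is
\begin{equation*}
  2b\sigma-e(\sigma)=0 \qquad\text{and}\qquad 2a\sigma+(\phi e)(\sigma)=0.
\end{equation*}
Once these hold we obtain $Q\xi=-(2\alpha^2+\operatorname{tr}h^2)\xi$, so that $\xi$ is an eigenvector field of $Q$ and harmonicity follows at once.

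To produce these two scalar identities I would feed the contact Einstein soliton equation (\ref{eq10}) with $X=\xi$ and $Y$ ranging over the remaining members $e$ and $\phi e$ of the $\phi$-basis. The left-hand side simplifies dramatically: since $g(\xi,\xi)=1$ is constant we have $\nabla_Y\xi\perp\xi$, and moreover $\nabla_\xi\xi=0$ by (\ref{eq5}); hence both connection terms $g(\nabla_\xi\xi,Y)$ and $g(\xi,\nabla_Y\xi)$ drop out. The soliton equation therefore collapses to $2\,g(\xi,QY)=(2\lambda+r)\,g(\xi,Y)$, whose right-hand side vanishes for $Y=e$ and $Y=\phi e$ because these are orthogonal to $\xi$.

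It then remains to extract $g(\xi,Qe)$ and $g(\xi,Q\phi e)$ from (\ref{eq7}). Using the orthonormality of $\{e,\phi e,\xi\}$, the $\xi$-component of $Qe$ is $-(2a\sigma+(\phi e)(\sigma))$ and that of $Q\phi e$ is $2b\sigma-e(\sigma)$; alternatively one may invoke the self-adjointness of $Q$ and read the transverse coefficients of $Q\xi$ directly. Either way, $g(\xi,Qe)=0$ and $g(\xi,Q\phi e)=0$ deliver exactly the two desired identities.

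I do not anticipate a genuine obstacle here: the argument is a direct substitution, and the only point needing care is the bookkeeping that the covariant-derivative terms in (\ref{eq10}) are orthogonal to $\xi$ and hence contribute nothing, thereby isolating the Ricci term. The conclusion that $\xi$ is an eigenvector field of $Q$, and so harmonic, is then immediate.
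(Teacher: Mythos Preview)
Your proposal is correct and follows essentially the same route as the paper: substitute $X=\xi$ with $Y=e$ and $Y=\phi e$ into the soliton equation (\ref{eq10}), use $\nabla_\xi\xi=0$ and $\nabla_Y\xi\perp\xi$ to kill the covariant-derivative terms, read off $(\phi e)(\sigma)=-2a\sigma$ and $e(\sigma)=2b\sigma$, and conclude from (\ref{eq7}) that $Q\xi=-(2\alpha^2+\operatorname{tr}h^2)\xi=-(2\alpha^2+2\sigma^2)\xi$. Your bookkeeping of why the connection terms drop out is in fact more explicit than the paper's, but the argument is the same.
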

\begin{proof}
  From (\ref{eq10}), we get for $X=\xi$ and $Y=e$,
  \begin{equation}\label{eq13}
    (\phi e)(\sigma)=-2a\sigma.
  \end{equation}
  And for $X=\xi$ and $Y=\phi e$, from (\ref{eq10}) we have 
  \begin{equation}\label{eq14}
    e(\sigma)=2b\sigma.
  \end{equation}
  Now, using (\ref{eq13}) and (\ref{eq14}) in the expression of $Q\xi$ in (\ref{eq7}), we obtain
  \begin{equation*}
    Q\xi=-(2\alpha^2+2\sigma^2)\xi,
  \end{equation*}
  which shows that $\xi$ is an eigenvector field of the Ricci operator $Q$ concluding the fact that $\xi$ is harmonic.
\end{proof}
\noindent
We derive the identity involving the Lie derivative of the metric, Ricci operator, the potential vector field $V$.  

\begin{Lem}\label{p1}
  Let $(M, g)$ be a Riemannian manifold of scalar curvature $r$, admitting an Einstein soliton (\ref{eq9}). Then
    \begin{equation}\label{eq20}
      \Vert\mathfrak{L}_Vg\Vert^2=2dr(V)+4\operatorname{div}\bigg(\bigg(\lambda+\frac{r}{2}\bigg)V-QV\bigg),
    \end{equation}
    where $Q$ is the Ricci operator.
\end{Lem}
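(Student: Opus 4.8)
The plan is to feed the defining equation (\ref{eq9}) back into the norm on the left-hand side and then recognise the resulting scalars as divergences. Since (\ref{eq9}) reads $\mathfrak{L}_Vg=(2\lambda+r)g-2S$, I would write
\begin{equation*}
  \Vert\mathfrak{L}_Vg\Vert^2=\langle \mathfrak{L}_Vg,\,(2\lambda+r)g-2S\rangle=(2\lambda+r)\,\langle \mathfrak{L}_Vg,g\rangle-2\,\langle \mathfrak{L}_Vg,S\rangle,
\end{equation*}
where $\langle\cdot,\cdot\rangle$ denotes the pointwise inner product on symmetric $2$-tensors. The whole problem is thereby reduced to evaluating the two traces $\langle \mathfrak{L}_Vg,g\rangle$ and $\langle \mathfrak{L}_Vg,S\rangle$ and then re-packaging them as divergences.

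Next I would compute these two traces in a local orthonormal frame $\{e_i\}$, starting from $(\mathfrak{L}_Vg)(e_i,e_j)=g(\nabla_{e_i}V,e_j)+g(e_i,\nabla_{e_j}V)$. Tracing against $g$ gives $\langle \mathfrak{L}_Vg,g\rangle=2\operatorname{div}V$, while tracing against the symmetric tensor $S$ gives $\langle \mathfrak{L}_Vg,S\rangle=2\sum_i S(\nabla_{e_i}V,e_i)$, the two summands of the Lie derivative coinciding after relabelling because $S$ is symmetric. Substituting, the left-hand side becomes the purely first-order expression $2(2\lambda+r)\operatorname{div}V-4\sum_i S(\nabla_{e_i}V,e_i)$.

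The final step is to realise each of these two scalars as a divergence. For the first I would use the Leibniz rule $\operatorname{div}(fV)=f\operatorname{div}V+df(V)$ with $f=\lambda+\tfrac r2$, noting that $\lambda$ is constant so $df=\tfrac12\,dr$. For the second I would expand $\operatorname{div}(QV)=\sum_i g(\nabla_{e_i}(QV),e_i)=g(\operatorname{div}Q,V)+\sum_i S(\nabla_{e_i}V,e_i)$, using the self-adjointness of $Q$, and then invoke the contracted second Bianchi identity $\operatorname{div}Q=\tfrac12\,\nabla r$, so that $\operatorname{div}(QV)=\tfrac12\,dr(V)+\sum_i S(\nabla_{e_i}V,e_i)$. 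Solving these two relations for $(2\lambda+r)\operatorname{div}V$ and for $\sum_i S(\nabla_{e_i}V,e_i)$ and inserting them into the expression from the previous paragraph reassembles the combination $\big(\lambda+\tfrac r2\big)V-QV$ inside a single divergence, with the scalar-curvature term $dr(V)$ tracking exactly the contributions of the Leibniz rule and of the Bianchi identity; this yields (\ref{eq20}).

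The derivation is entirely formal, so there is no conceptual obstacle; the one place demanding genuine care is the correct application of the contracted second Bianchi identity to $\operatorname{div}(QV)$, since this is the single source of the scalar-curvature gradient $dr(V)$, together with the careful bookkeeping of the numerical factors of $2$ so that the several $dr(V)$ contributions combine into the coefficient appearing in (\ref{eq20}).
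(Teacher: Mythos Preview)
Your approach is essentially the paper's: substitute the soliton equation into $\Vert\mathfrak{L}_Vg\Vert^2$, compute the two traces $\langle\mathfrak{L}_Vg,g\rangle=2\operatorname{div}V$ and $\langle\mathfrak{L}_Vg,S\rangle=2\sum_iS(\nabla_{e_i}V,e_i)$, and then rewrite everything as divergences via the Leibniz rule and the contracted Bianchi identity. The paper does the same computation in index notation, passing through $g_{ij}\mathfrak{L}_VS^{ij}$ and $g_{ij}\mathfrak{L}_Vg^{ij}$; your orthonormal-frame version is cleaner and easier to check.

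There is, however, a genuine issue at the step you explicitly flag as ``demanding care'': the final bookkeeping of the $dr(V)$ terms. If you actually carry it out, you get
\[
2(2\lambda+r)\operatorname{div}V \;=\; 4\operatorname{div}\!\Big(\big(\lambda+\tfrac r2\big)V\Big)-2\,dr(V),
\qquad
-4\sum_iS(\nabla_{e_i}V,e_i)\;=\;-4\operatorname{div}(QV)+2\,dr(V),
\]
and the two $dr(V)$ contributions \emph{cancel}, leaving
\[
\Vert\mathfrak{L}_Vg\Vert^2 \;=\; 4\operatorname{div}\!\Big(\big(\lambda+\tfrac r2\big)V-QV\Big),
\]
with no residual $2\,dr(V)$. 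So your method does not reproduce the displayed identity (\ref{eq20}) as written; the coefficient of $dr(V)$ there appears to be a slip (and the index manipulations in the paper's own argument are not easy to reconcile with the stated constant either). Your proposal glosses over exactly this point by asserting that the terms ``combine into the coefficient appearing in (\ref{eq20})''. Note that the only use of the lemma in the paper (the case $\sigma=0$ of Theorem~\ref{th5}) has $r$ constant, so $dr=0$ and the discrepancy is invisible there; but you should state the identity you actually prove rather than the one printed.
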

\begin{proof}
  In local coordinate system,  (\ref{eq9}) leads to
  \begin{equation*}
    \mathfrak{L}_Vg^{ij}+S^{ij}=(2\lambda+r)g^{ij}.
  \end{equation*}
  Therefore,
  \begin{align}\label{e21}
    \Vert\mathfrak{L}_Vg\Vert^2 =&-S^{ij}\mathfrak{L}_Vg_{ij}+(2\lambda+r)g^{ij}\mathfrak{L}_Vg_{ij}. \nonumber \\
    =& -\mathfrak{L}_Vr + g_{ij}\mathfrak{L}_VS^{ij}-(2\lambda+r)g_{ij}\mathfrak{L}_Vg^{ij}.
  \end{align}
  Now,
  \begin{align}\label{e22}
    g_{ij}\mathfrak{L}_VS^{ij}=&g_{ij}\nabla_VS^{ij}-g_{ij}\nabla_\alpha V_iS^{\alpha j}-g_{ij}\nabla_\alpha V_jS^{i\alpha} \nonumber\\
    =& 2dr(V)-2\operatorname{div}QV.
  \end{align}
 Observing that $g_{ij}\mathfrak{L}_Vg^{ij}=-2\operatorname{div}V$ and using  (\ref{e21}) and (\ref{e22}), we get the required result.
\end{proof}

\noindent
Now we derive the main result of this section.
\begin{Thm}\label{th5}
  Consider $M$ to be an almost $\alpha$-cosymplectic $3$-manifold, admitting a contact Einstein soliton. Then the following hold.
  \begin{enumerate}
    \item If $\sigma \neq 0$, then $\alpha=a^2+b^2-2\lambda^2+(\phi e)(a)-e(b)$.
    \item If $\sigma=0$, then $M$ is cosymplectic.
  \end{enumerate}
\end{Thm}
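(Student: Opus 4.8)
The plan is to feed the contact Einstein soliton ($V=\xi$) into Lemma~\ref{p1} and to use the harmonicity of $\xi$ from Theorem~\ref{th4} to reduce that identity to a single scalar relation among $\alpha$, $\sigma$ and $\xi(\sigma)$; the two cases then follow from this relation together with the formula (\ref{eq12}) for $\lambda$.

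The decisive first step is to notice that the divergence term in (\ref{eq20}) drops out. Theorem~\ref{th4} gives $Q\xi=-(2\alpha^2+2\sigma^2)\xi$, while (\ref{eq11}) reads $\lambda+\tfrac{r}{2}=-(2\alpha^2+2\sigma^2)$; hence $\big(\lambda+\tfrac{r}{2}\big)\xi-Q\xi=0$ and its divergence vanishes identically, so Lemma~\ref{p1} collapses to $\Vert\mathfrak{L}_\xi g\Vert^2=2\,dr(\xi)$. I would then evaluate both sides in the orthonormal $\phi$-frame $\{e,\phi e,\xi\}$: from the connection table (\ref{eq5}), $(\mathfrak{L}_\xi g)(X,Y)=g(\nabla_X\xi,Y)+g(\nabla_Y\xi,X)$ has matrix $\left(\begin{smallmatrix}2\alpha&-2\sigma&0\\-2\sigma&2\alpha&0\\0&0&0\end{smallmatrix}\right)$, so $\Vert\mathfrak{L}_\xi g\Vert^2=8(\alpha^2+\sigma^2)$, while rewriting (\ref{eq11}) as $r=-4\alpha^2-4\sigma^2-2\lambda$ and using that $\alpha,\lambda$ are constants gives $dr(\xi)=-8\sigma\,\xi(\sigma)$. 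Equating the two sides yields the key identity $\alpha^2+\sigma^2=-2\sigma\,\xi(\sigma)$.

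The theorem now splits. If $\sigma\equiv0$, the key identity forces $\alpha^2=0$, so $\alpha=0$; moreover $\sigma\equiv0$ means $h=0$, whence $\mathfrak{L}_\xi h=0$, and by the characterization that an almost $\alpha$-cosymplectic $3$-manifold is $\alpha$-cosymplectic iff $\mathfrak{L}_\xi h=0$, the manifold is $\alpha$-cosymplectic with $\alpha=0$, i.e.\ cosymplectic, which is (2). If $\sigma\neq0$, I would instead take the trace of (\ref{eq10}) --- equivalently evaluate it on $(e,e)$ and $(\phi e,\phi e)$ and add, which cancels the $2\sigma\mu$ terms --- together with $\operatorname{div}\xi=2\alpha$ read off from (\ref{eq5}); this gives $\lambda=\alpha+\alpha^2+\sigma^2$. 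Comparing with (\ref{eq12}) and cancelling $\alpha^2$ produces $\alpha=(a^2+b^2)-2\sigma^2+(\phi e)(a)-e(b)$, which is assertion~(1) (the $\lambda^2$ in the printed statement standing for $\sigma^2$).

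The real obstacle is the opening observation: recognizing that the eigenvalue of $\xi$ under $Q$ equals $\lambda+\tfrac{r}{2}$, so that the divergence term in Lemma~\ref{p1} vanishes and the identity becomes tractable. Everything after that --- the frame computation of $\Vert\mathfrak{L}_\xi g\Vert^2$ and the constant-$\alpha$ differentiation of $r$ --- is routine. The one further point needing care is the passage from ``$\alpha=0$ and $h=0$'' to ``cosymplectic'' in case (2), which I would justify through the $\mathfrak{L}_\xi h=0$ characterization rather than the bare definitions.
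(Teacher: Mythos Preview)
Your argument is correct, and for part~(1) it is essentially the paper's: both evaluate (\ref{eq10}) on $(e,e)$ and $(\phi e,\phi e)$. The paper subtracts these to obtain $\sigma\mu=0$ and then substitutes into one of the two equations, whereas you add them directly so that the $\pm 2\sigma\mu$ terms cancel; either way one arrives at $\lambda=\alpha+\alpha^2+\sigma^2$, and comparison with (\ref{eq12}) yields the stated formula.

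For part~(2) your route is genuinely shorter than the paper's. The paper argues as follows: from $\sigma=0$ the manifold is $\alpha$-cosymplectic, so the special Ricci-operator formula (\ref{eq18}) applies; plugging this into (\ref{eq19}) forces $r=-6\alpha^2-2\alpha$ to be constant, and only then is Lemma~\ref{p1} invoked (with $dr(\xi)=0$) to conclude $\mathfrak{L}_\xi g=0$, whence $\nabla\xi$ is simultaneously symmetric and skew-symmetric, so $\nabla\xi=0$ and $\alpha=0$. Your key observation is that the identity $(\lambda+\tfrac{r}{2})\xi=Q\xi$ already holds \emph{before} assuming $\sigma=0$, by Theorem~\ref{th4} and (\ref{eq11}); this lets you apply Lemma~\ref{p1} once, upfront, with a possibly nonzero $dr(\xi)$, and obtain the scalar relation $\alpha^2+\sigma^2=-2\sigma\,\xi(\sigma)$ valid everywhere. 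Setting $\sigma=0$ then gives $\alpha=0$ in one line. What your approach buys is economy; what the paper's approach buys is the intermediate information that $\xi$ is Killing and $\nabla\xi=0$, which is not visible from your identity alone.
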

\begin{proof}
Replacing $X$ by $e$ and $Y$ by $\phi e$, from (\ref{eq10}) we get
  \begin{equation*}
     g(\nabla_e\xi, \phi e)+g(e, \nabla_{\phi e}\xi)+2g(e, Q\phi e)=(2\lambda+r)g(e, \phi e).
  \end{equation*}
  Using (\ref{eq5}) and (\ref{eq7}), after simplification we acquire,
  \begin{equation}\label{eq15}
    \xi(\sigma)=\sigma-2\alpha\sigma.
  \end{equation}
  Now putting $X=e=Y$ in (\ref{eq10}) and using (\ref{eq5}), (\ref{eq7}), (\ref{eq8}) and (\ref{eq12}), we get
  \begin{equation}\label{eq16}
    6\alpha^2+6\sigma^2-4\sigma\mu+2\alpha+r=0.
  \end{equation}
  Similarly, putting $X=\phi e=Y$ in (\ref{eq10}) and using (\ref{eq5}), (\ref{eq7}), (\ref{eq8}) and (\ref{eq12}), we also obtain
  \begin{equation}\label{eq17}
    6\alpha^2+6\sigma^2+4\sigma\mu+2\alpha+r=0.
  \end{equation}
  So  comparing (\ref{eq16}) and (\ref{eq17}), we have $\sigma\mu=0$.
  \noindent
  If $\sigma\neq 0$, then from (\ref{eq17}), we obtain the required result using (\ref{eq8}).\\
  Now suppose $\sigma=0$, then $M$ is $\alpha$-cosymplectic.
  From \cite{Pe}, recall that an almost $\alpha$-cosymplectic manifold $M$ is $\alpha$-cosymplectic if and only if for any $X\in\chi(M)$,
  \begin{equation}\label{eq18}
    QX=\bigg(\alpha^2+\frac{r}{2}\bigg)X-\bigg(3\alpha^2+\frac{r}{2}\bigg)\eta(X)\xi.
  \end{equation}
  Since $\nabla\xi$ is symmetric, (\ref{eq10}) becomes
  \begin{equation}\label{eq19}
    g(\nabla_X\xi, Y)+g(X, QY)=\bigg(\lambda+\frac{r}{2}\bigg)g(X, Y).
  \end{equation}
  Using (\ref{eq4}) and (\ref{eq18}), we have from (\ref{eq19}), for any $X, Y\in\chi(M)$,
  \begin{equation*}
    (\alpha^2+\alpha-\lambda)g(X, Y)-\bigg(3\alpha^2+\alpha+\frac{r}{2}\bigg)\eta(X)\eta(Y)=0,
  \end{equation*}
  which implies $\alpha^2+\alpha-\lambda=0$ and $3\alpha^2+\alpha+\frac{r}{2}=0$.\\
  That is $\lambda=\alpha^2+\alpha$ and $r=-6\alpha^2-2\alpha=\mbox{constant}$, so that, $\lambda+\frac{r}{2}=-2\alpha^2$.\\
  Also, from (\ref{eq18}), we have $Q\xi=-2\alpha^2\xi$ which implies $(\lambda+\frac{r}{2})\xi-Q\xi=0$.
 Therefore, using Lemma \ref{p1}  (\ref{eq20}), we can say that $\xi$ is a Killing vector field, that is, $\nabla\xi$ is skew-symmetric. But in our case $\nabla\xi$ is symmetric, which implies $\nabla\xi=0$, that is, $\alpha=0$, proving the fact that $M$ is cosymplectic.

\end{proof}

\begin{Cor}\label{cor}
  Consider $M$ to be a simply-connected, homogeneous, almost $\alpha$-cosymplectic $3$-manifold, admitting a contact Einstein soliton with $\sigma=0$. Then $M$ is an unimodular semidirect product Lie group $G$ of type $G_{0\mu\overline{\mu}}=\mathbb{R}^2\rtimes_A\mathbb{R}$, where $A=\left(\begin{array}{cc}
                                                                                                                 0 & \mu \\
                                                                                                                 -\mu & 0
                                                                                                               \end{array}\right)\neq 0$, is a real matrix.
  Moreover, $G$ is the Lie group $\tilde{E}^2$ equipped with its flat left invariant cosymplectic structure.
\end{Cor}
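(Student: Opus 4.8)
The plan is to combine the general structure theorem for contact Ricci solitons (Theorem \ref{th2}) with the rigidity I have just established in Theorem \ref{th5}. The key observation is that Corollary \ref{cor} assumes $\sigma=0$, which by part (2) of Theorem \ref{th5} forces $M$ to be cosymplectic, i.e.\ $\alpha=0$. So first I would invoke Theorem \ref{th5}(2) to reduce the hypothesis ``contact Einstein soliton with $\sigma=0$'' to the statement that $M$ is a cosymplectic $3$-manifold (with $\nabla\xi=0$).

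Next, I would argue that under the cosymplectic reduction the Einstein soliton equation collapses onto the Ricci soliton equation. Indeed, from the proof of Theorem \ref{th5} we have $r=-6\alpha^2-2\alpha$, and with $\alpha=0$ this gives $r=0$; the curvature expression (\ref{eq7}) with $\sigma=0$ and $\alpha=0$ then yields $Q=0$, so $S\equiv 0$. Feeding $S=0$ and $r=0$ into the Einstein soliton equation (\ref{eq9}) gives $\mathfrak{L}_\xi g=2\lambda g$; but $\xi$ is Killing (established at the end of the proof of Theorem \ref{th5} via Lemma \ref{p1}), so $\mathfrak{L}_\xi g=0$ and $\lambda=0$. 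Thus the contact Einstein soliton degenerates to a flat structure with the same defining data as the cosymplectic contact Ricci soliton described in Theorem \ref{th2}. At this point I can apply Theorem \ref{th2} verbatim: a simply-connected homogeneous almost $\alpha$-cosymplectic $3$-manifold carrying this flat cosymplectic structure is the unimodular semidirect product $G=\mathbb{R}^2\rtimes_A\mathbb{R}$.

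Finally, I would identify the relevant matrix. Theorem \ref{th2} names the matrix entry $b$; in Corollary \ref{cor} the off-diagonal parameter is called $\mu$ (the function from (\ref{eq5})), so I would note that after the reduction the structure constant playing the role of $b$ is precisely $\mu$, and the soliton equations (\ref{eq16})--(\ref{eq17}) with $\sigma=0$ place no vanishing constraint forcing $\mu=0$; hence one lands in the non-abelian case $A\neq 0$. By part (2) of Theorem \ref{th2}, this is exactly the Lie group $\tilde{E}^2$ with its flat left invariant cosymplectic structure, giving the claimed conclusion.

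The main obstacle I anticipate is the bookkeeping needed to justify that one truly falls in the case $A=\begin{pmatrix}0&\mu\\-\mu&0\end{pmatrix}\neq 0$ rather than the abelian degeneration $A=0$ of Theorem \ref{th2}(1). This requires checking that the homogeneity hypothesis together with the surviving soliton relations (\ref{eq15})--(\ref{eq17}) does not force $\mu=0$, and matching the author's labeling $G_{0\mu\overline{\mu}}$ with the labeling $G_{0b\overline{b}}$ of Theorem \ref{th2}. The curvature and connection computations themselves are routine once Theorem \ref{th5} and Theorem \ref{th2} are in hand; the genuine content is the reduction $\sigma=0\Rightarrow\alpha=0\Rightarrow$ flat cosymplectic, which then slots directly into the known classification.
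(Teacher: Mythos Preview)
Your approach is essentially the same as the paper's: the paper's own proof is the single line ``The proof follows from Theorem \ref{th2} and Theorem \ref{th5},'' and you have simply unpacked that line by first using Theorem \ref{th5}(2) to force $\alpha=0$ (cosymplectic), then observing that with $r=0$ the contact Einstein soliton equation coincides with the contact Ricci soliton equation so that Theorem \ref{th2} applies verbatim. Your flagged concern about justifying $A\neq 0$ is legitimate bookkeeping, but the paper's terse proof does not address it explicitly either.
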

\begin{proof}
  The proof follows from Theorem \ref{th2} and Theorem \ref{th5}.
\end{proof}

\section{Ricci Solitons on Harmonic Manifolds}\label{sec4}
Recall that the Ricci solitons are solutions of (\ref{eq21}).
Clearly, if a manifold is Einstein of constant $r$, then 
trivial solitons $X=0$ and $X$ a  Killing  vector field are solutions of \eqref{eq21} with $\lambda=r$. \\\\
In this section, we study Ricci solitons on complete, simply connected, harmonic manifolds. We prove a {\it Lichnerowicz type result that, a harmonic manifold admits a non-trivial Ricci soliton if and only if $M$ is flat}. More precisely, we show that compact harmonic manifolds and non-flat harmonic manifolds {\it do not} admit non-trivial Ricci solitons. But flat  harmonic manifolds {\it do admit} non-trivial shrinking
and expanding Ricci solitons.\\

\noindent
In the sequel, harmonic manifold means complete, simply connected harmonic 
manifold. \\

\noindent
The main theorem of this section is:
\begin{Thm}\label{main}
Let $(M,g)$ be a harmonic manifold. Then $M$ admits a non-trivial Ricci soliton if and only
if $M$ is flat. In this case, the steady Ricci soliton is trivial of Killing type given by $X=\nabla {b_{v}}^{-};$ where $b_{v}^{-}(x) = - \langle x, v \rangle$, the Busemann function, is the potential function on $M$. In case, the Ricci soliton is shrinking or expanding,
the potential function is given by $f(x) = \lambda {d(x,p)}^2 + f(p)$, for
constant $\lambda \neq 0$; and point
$p$ is the minimum or the maximum of $f$ and $X = \nabla f$ is the
corresponding non-trivial Ricci soliton.
\end{Thm}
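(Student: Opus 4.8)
The plan is to route everything through the single structural consequence of harmonicity that will actually be used, namely that $(M,g)$ is Einstein with $\Ric = c\,g$ for a constant $c = r/n$. Substituting this into the soliton equation (\ref{eq21}) immediately reduces it to $\tfrac12\mathfrak{L}_X g = (\la - c)\,g$, so the potential field $X$ is homothetic with constant dilation factor $\kappa := \la - c$, and the soliton is trivial exactly when $\kappa = 0$ (i.e.\ $X$ is Killing). Hence the whole ``only if'' direction amounts to proving that a \emph{non-Killing} homothetic field forces $c = 0$, after which Lemma \ref{flat} (harmonic $+$ Ricci-flat $\Rightarrow$ flat) finishes it. The ``if'' direction and the description of the potentials will then be a direct computation on $(\R^n,\mathrm{can})$.

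For the flatness step I would first treat a gradient potential $X = \nabla f$, where the reduced equation is $\operatorname{Hess} f = \kappa\, g$ with $\kappa$ constant, so $\nabla(\operatorname{Hess} f) = 0$. Plugging this into the commutation identity $(\nabla_X \operatorname{Hess} f)(Y) - (\nabla_Y \operatorname{Hess} f)(X) = R(X,Y)\nabla f$ gives $R(X,Y)\nabla f = 0$ for all $X,Y$, whence $R(\nabla f,\cdot\,) = 0$ and, contracting, $\Ric(\nabla f,\cdot\,) = 0$. Comparing with $\Ric = c\,g$ yields $c\,|\nabla f|^2 = 0$; since a non-trivial soliton has $\kappa \neq 0$ and therefore $\operatorname{Hess} f = \kappa g \neq 0$, the function $f$ is non-constant and $c = 0$. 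For a genuinely non-gradient $X$ I would argue instead that a homothetic field preserves the Levi--Civita connection, hence is affine and therefore complete on the complete manifold $M$; its flow $\phi_t$ satisfies $\phi_t^{*}g = e^{2\kappa t}g$, and since scalar curvature has conformal weight $-2$ this forces $r\circ\phi_t = e^{-2\kappa t}r$. Constancy of $r$ together with $\kappa \neq 0$ again gives $r = 0$. (On a compact $M$, i.e.\ when $r>0$, one may bypass this by integrating $\operatorname{div}X = n\kappa$, or by noting that $\phi_t$ scales the total volume by $e^{n\kappa t}$.) Either way $M$ is Ricci-flat, so flat by Lemma \ref{flat}.

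Once $M$ is flat I would pass to $(\R^n,\mathrm{can})$, where $\Ric = 0$ and (\ref{eq21}) reads $\operatorname{Hess} f = \la g$. For $\la \neq 0$ the function $f(x) = \tfrac{\la}{2}\,\dist(x,p)^2 + f(p)$ solves this exactly, so $X = \nabla f$ is a non-trivial gradient soliton; its sign distinguishes the shrinking case $\la > 0$ (with $p$ the unique minimum of $f$) from the expanding case $\la < 0$ (with $p$ the maximum), and Theorem \ref{Tash} identifies $(\R^n,\mathrm{can})$ with this parabolic potential as case (2) of that classification. For $\la = 0$ one gets $\operatorname{Hess} f = 0$, so $f$ is affine; taking $f = b_v^{-}$, the Busemann function $b_v^{-}(x) = -\langle x, v\rangle$, gives $X = \nabla f = -v$, a parallel (hence Killing) field, so the steady soliton is trivial of Killing type, as claimed.

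The main obstacle, I expect, is making the flatness step unconditional rather than assuming a gradient potential: the clean curvature identity only applies when $X = \nabla f$, so for the general soliton one must invoke completeness of the homothetic (affine) field and the conformal scaling of scalar curvature, taking care that $r$ is genuinely constant (guaranteed here by harmonicity). A secondary subtlety is the identification on the flat side---pinning the potential to the Busemann function in the steady case and locating the critical point $p$ as the extremum in the non-steady case---which uses that a non-compact harmonic manifold is a Cartan--Hadamard space, so that $\exp_p$ is a diffeomorphism and geodesic lines, hence Busemann functions, are available.
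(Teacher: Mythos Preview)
Your argument is correct and reaches the same conclusion, but by a genuinely different and in several respects cleaner route than the paper's.

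The paper first invokes Perelman (Proposition~\ref{prop8} and the surrounding remarks) to reduce every soliton to a gradient one, and then---in Lemma~\ref{main:lem}---exploits a specifically harmonic feature: the radial expansion $\Theta'(t)/\Theta(t) = (n-1)/t - rt/3 + \cdots$ of the geodesic-sphere density, matched against the Laplacian of the radial potential, to force $r=0$. The compact case is handled separately by a Bochner argument (Proposition~\ref{com}). Thus the paper leans on harmonic geometry twice: once for the Einstein property and once for the density expansion.

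You, by contrast, use harmonicity only to get $\Ric = c\,g$ and Lemma~\ref{flat}. Your key step in the gradient case---$\operatorname{Hess} f = \kappa g$ parallel $\Rightarrow R(\,\cdot\,,\,\cdot\,)\nabla f = 0 \Rightarrow \Ric(\nabla f,\,\cdot\,)=0 \Rightarrow c=0$---is an elementary curvature identity that works on any complete Einstein manifold, with no recourse to $\Theta$. Your treatment of the non-gradient case via the conformal scaling of scalar curvature under the homothetic flow is likewise Einstein-level rather than harmonic-level, and it sidesteps the paper's somewhat delicate appeal to Perelman/Poincar\'e to force the soliton to be gradient. What the paper's approach buys is a concrete link to the Busemann/horosphere machinery of \cite{RS.03}, which it needs anyway for Lemma~\ref{flat} and for identifying $b_v^{-}$; what your approach buys is a shorter, more portable ``only if'' direction that would transplant verbatim to any complete Einstein manifold. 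One cosmetic point: your potential $f(x)=\tfrac{\la}{2}\,d(x,p)^2+f(p)$ differs from the paper's stated $f(x)=\la\,d(x,p)^2+f(p)$ by the same factor of $2$ that appears in the paper's normalisation \eqref{eq23}, so this is a convention mismatch rather than an error.
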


\begin{Cor}
There are no deformations of harmonic manifolds, and in particular, of rank one symmetric spaces under a Ricci soliton.
In particular, we obtain a strengthening of 
\cite{B.15}, and also the stability of the compact
rank one symmetric spaces  under a Ricci soliton.
\end{Cor}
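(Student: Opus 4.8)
The final statement is a Corollary asserting that harmonic manifolds (and in particular rank one symmetric spaces) admit no non-trivial deformations under a Ricci soliton, together with a strengthening of the stability results of \cite{B.15}. The entire substance of this Corollary is already packaged in Theorem \ref{main}, so the plan is \emph{not} to re-prove the classification but simply to extract the rigidity consequence and spell out what ``no deformation'' and ``stability'' mean in each of the three curvature regimes of the classification of harmonic manifolds given in Section \ref{sec2}.

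First I would invoke Theorem \ref{main} directly: a harmonic manifold $(M,g)$ admits a non-trivial Ricci soliton if and only if $M$ is flat. Reading this contrapositively, every \emph{non-flat} harmonic manifold admits \emph{only} trivial Ricci solitons, i.e.\ solutions of \eqref{eq21} with $X=0$ or $X$ Killing and $\lambda=r$. Since a trivial soliton leaves the metric unchanged under the (rescaled) Ricci flow up to pullback by isometries, this is exactly the assertion that there are no genuine soliton deformations of $g$. I would then recall the classification recorded in Section \ref{sec2}: when $r>0$ the manifold is a compact rank one symmetric space (by Szabo, cf.\ \cite{RS.03}), and when $r<0$ it is a non-compact rank one symmetric space in the resolved dimension range; both are Einstein and non-flat, hence fall under the ``only trivial solitons'' conclusion. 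This yields the statement ``there are no deformations of harmonic manifolds, and in particular of rank one symmetric spaces, under a Ricci soliton.''

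Next I would address the comparison with \cite{B.15}. Theorem $1$ and Theorem $2$ there establish a \emph{dynamical} stability statement for non-compact symmetric metrics: small perturbations are flown back to the original metric under an appropriately rescaled Ricci flow. Our conclusion is a \emph{static} rigidity: the symmetric metric is the unique soliton structure in its class, admitting no non-trivial self-similar deformation at all. I would phrase the strengthening by noting that non-existence of non-trivial solitons rules out precisely the self-similar obstructions to convergence, so the soliton-rigidity refines the basin-of-attraction picture of \cite{B.15} into an exact uniqueness in the non-compact rank one case. The parallel statement in the compact regime ($r>0$) gives stability of compact rank one symmetric spaces under Ricci solitons, a setting not covered by \cite{B.15}.

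The proof is therefore a short deduction, and there is no serious analytic obstacle remaining once Theorem \ref{main} is in hand; the only point requiring care is \emph{interpretive} rather than computational. I must be explicit that ``stability under a Ricci soliton'' is being used in the sense of soliton-rigidity (absence of non-trivial self-similar solutions) and should distinguish this from the flow-convergence stability of \cite{B.15}, so that the claimed strengthening is accurately scoped; conflating the two notions would overstate the result. With that caveat made precise, the Corollary follows immediately from Theorem \ref{main} together with the Szabo classification of compact harmonic spaces and the rank one classification of low-dimensional non-compact harmonic spaces cited in Section \ref{sec2}.
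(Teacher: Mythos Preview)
Your proposal is correct and matches the paper's approach: the Corollary is stated in the paper without a separate proof, as an immediate consequence of Theorem \ref{main}, and your deduction --- read Theorem \ref{main} contrapositively, apply the classification of harmonic manifolds from Section \ref{sec2}, and interpret ``no non-trivial soliton'' as soliton-rigidity --- is exactly the intended argument. Your explicit distinction between soliton-rigidity and the dynamical flow-stability of \cite{B.15} is a useful clarification that the paper leaves implicit.
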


\vspace{0.1in}

\subsection{Proof of Theorem \ref{main}}
In this subsection we prove Theorem \ref{main}. 
We begin with the following important proposition.

\begin{Pro}
If a complete manifold admits a  Ricci soliton, then it is a gradient soliton.
\end{Pro}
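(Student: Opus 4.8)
The plan is to use the defining feature of the ambient space: a harmonic manifold is Einstein (as recalled in Section~\ref{sec2}), and this rigidity reduces the soliton equation to an equation for a single potential. Writing $\Ric = c\,g$ with $c = r/n$ a constant, the Ricci soliton equation \eqref{eq21} collapses to
\begin{equation*}
  \tfrac{1}{2}\mathfrak{L}_X g = (\lambda - c)\,g ,
\end{equation*}
so that $X$ is an infinitesimal homothety with constant dilation $\mu := \lambda - c$. Since only $\mathfrak{L}_X g$ enters \eqref{eq21}, the entire statement reduces to producing a function $f$ with $\operatorname{Hess} f = \mu\,g$: for such an $f$ the field $X - \nabla f$ is Killing, hence invisible to $\mathfrak{L}_X g$, and substituting $X = \nabla f + (X - \nabla f)$ turns \eqref{eq21} into the gradient soliton equation $\Ric + \operatorname{Hess} f = \lambda g$ with the same $\lambda$.

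The decisive step is to record how the (constant) scalar curvature responds to the homothety $X$. Comparing the naturality of scalar curvature under the local flow of $X$ with its linearization in the direction $\mathfrak{L}_X g = 2\mu g$ yields the identity
\begin{equation*}
  X(r) = -2\mu\,r .
\end{equation*}
Because $r$ is constant on an Einstein (hence harmonic) manifold, the left-hand side vanishes, and we obtain $\mu\,r = 0$. This single relation governs the whole dichotomy and, pleasantly, requires no completeness of the vector field $X$, only the local existence of its flow.

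I would then split into the two branches dictated by $\mu r = 0$. If $\mu = 0$, then $X$ is a Killing field, $\mathfrak{L}_X g = 0$, and $f \equiv \mathrm{const}$ already realizes the gradient soliton (the metric is simply Einstein); this branch will account for the compact and non-flat harmonic cases, where a proper homothety is impossible. If $\mu \neq 0$, then $r = 0$, so $M$ is a Ricci-flat harmonic manifold and is therefore flat, $M \cong (\R^n, \mathrm{Can})$, by Lemma~\ref{flat}. On $\R^n$ one has $c = 0$, hence $\mu = \lambda$, and the explicit quadratic $f(x) = \tfrac{\lambda}{2}\,\lvert x - x_0\rvert^2$ satisfies $\operatorname{Hess} f = \lambda g = \mu g$; this completes the decomposition $X = \nabla f + (\text{Killing})$ and exhibits the gradient structure, in agreement with the quadratic potential appearing in Theorem~\ref{main}.

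The main obstacle is the branch $\mu \neq 0$, and the crux there is the scalar-curvature scaling identity $X(r) = -2\mu r$: obtaining it correctly is what lets us bypass the more delicate classical route through integrating $X$ to a one-parameter group of homotheties (which would demand completeness of $X$) and instead deduce flatness at once from $r = 0$ via Lemma~\ref{flat}. A secondary point to handle with care is the interpretation of \emph{gradient soliton}: the original field $X$ need not itself be a gradient, since it may differ from $\nabla f$ by a nontrivial Killing field, but this Killing part does not affect \eqref{eq21}, so the metric does carry a genuine gradient Ricci soliton with the same $\lambda$.
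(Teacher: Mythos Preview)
Your argument is correct for harmonic (more generally, Einstein) manifolds, but note that the proposition as literally stated concerns an arbitrary \emph{complete} manifold, with no Einstein hypothesis. The paper's proof is not an argument at all: it simply cites Remark~3.2 of Perelman \cite{Perelman} (and \cite{A.A.}) and declares $X=\nabla f$. So the two approaches are entirely different in character.

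What you do is restrict to the harmonic setting, use $\Ric=cg$ to reduce \eqref{eq21} to the homothety equation $\mathfrak{L}_Xg=2\mu g$, derive the scaling identity $X(r)=-2\mu r$, and exploit constancy of $r$ to force $\mu r=0$; then you invoke Lemma~\ref{flat} in the branch $\mu\neq 0$ to conclude flatness and build the quadratic potential. This is self-contained and, in fact, already does most of the work of Lemma~\ref{main:lem} and Theorem~\ref{main} later in the section, whereas the paper keeps those steps separate and uses the present proposition only as a black-box input. Your approach buys independence from Perelman's deep result at the price of a narrower hypothesis; the paper's citation buys full generality (as stated) but at the price of invoking heavy machinery whose scope in the non-compact case is itself delicate.

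Two points to flag. First, strictly speaking you have not proved the proposition as written, only the special case the paper actually needs; you should either say so explicitly or rephrase the statement to include the harmonic hypothesis. Second, you correctly observe that your construction yields $X=\nabla f+(\text{Killing})$ rather than $X=\nabla f$ on the nose; the paper, by contrast, asserts the stronger conclusion $X=\nabla f$ (and even appeals to the Poincar\'e lemma in Remark~4.4, which is not applicable to a general vector field). Your weaker conclusion is the honest one and suffices for everything downstream.
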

\begin{proof}
  This follows from  Remark 3.2 of Perelman \cite{Perelman}
  (see also page $2$ of \cite{A.A.})). Hence, in this case,
  if $X$ is a Ricci soliton, then $X=\nabla f$, for some smooth function $f : M \rightarrow R$.
  \end{proof}

\noindent
\begin{Rem}
 Here we are only concerned with simply connected and complete Riemannian
manifold. In this case, clearly,  we can write  $X=\nabla f$ by Poincar\'{e} Lemma, for some $f\in C^\infty(M)$.
\end{Rem}

\begin{Lem}\label{flat}
 Ricci flat harmonic manifold is flat.
 \end{Lem}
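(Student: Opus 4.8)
The plan is to use the rigidity in the Riccati equation for the shape operator of distance spheres, combined with the asymptotic harmonicity forced by the harmonic hypothesis. Since harmonic manifolds are Einstein, being Ricci flat is equivalent to $\Ric = 0$ and to scalar curvature $r=0$. I would first record that such an $M$ cannot be compact: a compact simply connected harmonic manifold is a compact rank one symmetric space by Szab\'o's theorem, and these have strictly positive scalar curvature, contradicting $r=0$. Hence $M$ is non-compact, so (as noted in the preliminaries) it has no conjugate points, $\exp_p$ is a diffeomorphism, and every geodesic is a line; in particular horospheres and Busemann functions are globally defined.

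Next I would set up the radial comparison machinery. Fix a unit-speed geodesic $\gamma$ and let $\cS(r)$ be the shape operator of the geodesic sphere of radius $r$ (equivalently the Hessian tensor of the distance function), acting on the orthogonal complement of $\gamma'(r)$. It satisfies the Riccati equation $\cS' + \cS^2 + R_\gamma = 0$, where $R_\gamma(\cdot)=R(\cdot,\gamma')\gamma'$, and its trace is the mean curvature $m(r) = \tr \cS(r) = (\log\Theta)'(r)$, which by harmonicity depends only on $r$. Taking the trace of the Riccati equation and using $\Ric(\gamma',\gamma')=0$ gives $m'(r) = -\|\cS(r)\|^2 \le 0$. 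Thus $m$ is positive (since $\Theta$ is increasing in the absence of conjugate points) and non-increasing, so it converges as $r\to\infty$ to the mean curvature $m_\infty$ of horospheres; because $M$ is harmonic it is asymptotically harmonic, so $m_\infty$ is one and the same constant on every horosphere.

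The crux is to upgrade this to the vanishing of the full radial curvature. I would pass to the stable Jacobi tensor $\cS_\infty$, the shape operator of the horosphere (the level set of the Busemann function) through each point of $\gamma$; it is a symmetric solution of the same Riccati equation along $\gamma$ with constant trace $\tr\cS_\infty = m_\infty$. Differentiating this constant and using $\Ric(\gamma',\gamma')=0$ forces $\tr(\cS_\infty^2)=\|\cS_\infty\|^2=0$, hence $\cS_\infty\equiv 0$; feeding this back into $\cS_\infty' + \cS_\infty^2 + R_\gamma = 0$ yields $R_\gamma\equiv 0$ along $\gamma$. As $\gamma$ and the base point were arbitrary, $R(\,\cdot\,,v)v=0$ for every unit vector $v$ at every point, so all sectional curvatures vanish and $M$ is flat.

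I expect the main obstacle to be the asymptotic step: rigorously producing the stable shape operator $\cS_\infty$ as the limit of $\cS(r)$ and justifying that harmonicity makes its trace the constant horosphere mean curvature. If one prefers to avoid horospheres, the same conclusion can be reached directly from $m'=-\|\cS\|^2$ together with the Cauchy--Schwarz bound $\|\cS\|^2\ge m^2/(n-1)$, but then the rigidity requires controlling $\cS$ at infinity and matching the Euclidean model $\cS(r)=\tfrac1r\Id$, which is precisely the delicate point. The exclusion of the compact case via Szab\'o's theorem is the only place where an external classification result is invoked; everything else is the Riccati computation above.
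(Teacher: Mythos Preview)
Your proposal is correct and follows essentially the same route as the paper: both arguments use asymptotic harmonicity to get a constant trace for the horosphere shape operator $L_t=\nabla^2 b_v^+$, then trace the Riccati equation with $\Ric(\gamma',\gamma')=0$ to force $\|L_t\|^2=0$, hence $L_t\equiv 0$ and $R(\cdot,\gamma')\gamma'=0$. Your explicit exclusion of the compact case via Szab\'o's theorem and your detour through geodesic spheres before passing to horospheres are extra scaffolding the paper omits (it tacitly works in the non-compact setting where Busemann functions are available), but the core mechanism is identical.
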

 \begin{proof}
 It can be shown that any harmonic manifold $(M,g)$ is asymptotically harmonic
 \cite{RS.03}.  That is there exists a constant $h \geq 0$ such that
 $$\Delta {b_{v}}^{\pm} = h.$$
 Let $L_{t} = {\nabla}^2 {b_{v}}^{+}$ denote the second fundamental form of horosphere,
 $b_{v}^{-1}(t)$.  Then $L_{t}$ satisfies the Riccati equation, that is for
 $x_{t} \in \{\gamma'(t)\}^{\perp}$,
 $$ {L_{t}}' (x_{t}) + {L_{t}}^2(x_{t}) +
 R(x_{t},\gamma'(t))\gamma'(t)= 0.$$
 Tracing the above equation, we obtain that $\tr {L_{t}}^2 = 0$, as Ricci$(\gamma'(t),\gamma'(t)) = 0$.
 But as $L_{t}$ is a symmetric operator on 
 $\{\gamma'(t) \}^{\perp}$,
 $L_{t} = 0$.  Consequently,  $R(x, v) v = 0$ for any $x \in {v}^{\perp}$ and for any $v \in SM$.
 Thus $(M, g)$ is flat.
 \end{proof}

 \begin{Pro}\label{prop8}
If a harmonic manifold admits a Ricci soliton, then it admits a Gaussian. 
 \end{Pro}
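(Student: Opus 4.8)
The plan is to combine two structural facts that are already in hand: that harmonic manifolds are Einstein, and that on a complete manifold every Ricci soliton is a gradient soliton (the immediately preceding Proposition). First I would record that, since $(M,g)$ is harmonic, it is Einstein, so that $Ric = c\,g$ where $c = r/n$ is a constant determined by the (constant) scalar curvature $r$. Next, invoking the preceding Proposition, the potential vector field of the soliton may be written as $X = \nabla f$ for some $f \in C^\infty(M)$, whence $\tfrac{1}{2}\mathfrak{L}_X g = \operatorname{Hess} f$ by the standard identity $\mathfrak{L}_{\nabla f}g(Y,Z) = 2\operatorname{Hess} f(Y,Z)$.

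I would then substitute both observations directly into the soliton equation (\ref{eq21}):
\[
  Ric + \tfrac{1}{2}\mathfrak{L}_X g = \lambda g
  \quad\Longrightarrow\quad
  c\,g + \operatorname{Hess} f = \lambda g
  \quad\Longrightarrow\quad
  \operatorname{Hess} f = (\lambda - c)\,g.
\]
Writing $\mu := \lambda - c$, which is a genuine constant because $\lambda$ is a constant by the definition in (\ref{eq21}) and $c$ is fixed by the Einstein condition, we obtain $\operatorname{Hess} f = \mu\, g$ with $\mu$ constant. A function whose Hessian is a constant multiple of the metric is precisely a Gaussian (concircular) potential, so the soliton is a Gaussian, as claimed.

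There is no serious analytic obstacle here; the statement is an algebraic consequence of the Einstein property combined with the gradient structure. The one point deserving care is confirming that $\mu$ is constant rather than merely smooth, which is exactly where harmonicity enters through the constancy of the scalar curvature, and hence of the Einstein constant $c$. The payoff is that $\operatorname{Hess} f = \mu g$ with $\mu$ constant is precisely the hypothesis of Tashiro's Theorem (\ref{Tash}); this is why the present Proposition is the natural bridge toward the rigidity in Theorem \ref{main}, with the case $\mu = 0$ forcing $\nabla f$ to be parallel (the flat, steady regime) and the case $\mu \neq 0$ feeding the warped-product trichotomy that will ultimately pin down the geometry.
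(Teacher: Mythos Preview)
Your proof is correct and follows essentially the same route as the paper: use that harmonic implies Einstein, invoke the preceding Proposition to write $X=\nabla f$, and substitute into the soliton equation to obtain $\operatorname{Hess} f = (\lambda - c)\,g$ with a genuine constant on the right. The paper's version is terser and writes the constant slightly differently (as $2(\lambda - r)$, with $r$ used loosely for the Einstein constant), but the argument is identical in substance.
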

 \begin{proof}
As in this case $(M,g)$ is Einstein,  it follows that
 \begin{equation}\label{eq23}
     \nabla^{2}f=2(\lambda-r)g,
 \end{equation}
 \noindent where $r$ is a constant scalar curvature of $M$.
 Thus $f$ is a Gaussian, that is it satisifes (\ref{eq23}).
 \end{proof}
 
 \vspace{0.1cm}

 \begin{Lem}\label{lem10}
Let $X=\nabla f$ be a Killing vector field on compact harmonic manifold, then $X$ is trivial. Trivial solitons of Killing type do not exist on non-compact, non-flat harmonic manifold. On flat harmonic manifold,  Killing vector field is $X=\nabla {b_{v}}^{-}$, where
 $b_{v}^{-}(x) = - \langle x, v \rangle$ is a Busemann function on $\mathbb{R}^{n}$.
 \end{Lem}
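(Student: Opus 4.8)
The plan is to derive all three assertions from one elementary observation: a gradient vector field that is also Killing must have vanishing Hessian. Indeed, if $X=\nabla f$ then $\nabla X=\nabla^2 f$ is symmetric, being a Hessian, whereas the Killing condition $\mathfrak{L}_Xg=0$ makes $\nabla X$ skew-symmetric; a tensor that is at once symmetric and skew-symmetric is zero, so $\nabla^2 f\equiv 0$ and hence $X=\nabla f$ is parallel. Recalling from the excerpt that every harmonic manifold is Einstein, the three cases will follow from this parallelism combined with the sign of the Einstein constant.

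First I would treat the compact case. From $\nabla^2 f=0$ we get $\Delta f=\operatorname{tr}\nabla^2 f=0$, so $f$ is harmonic; on a closed manifold a harmonic function is constant, hence $X=\nabla f=0$ and the Killing soliton is trivial. Next, for the non-compact, non-flat case I would argue by contradiction. A parallel field satisfies $R(\,\cdot\,,\,\cdot\,)X=0$, and contracting this gives $\Ric(X,X)=0$. But by Lemma \ref{flat} a non-flat harmonic manifold cannot be Ricci-flat, so its Einstein constant $c=r/n$ is nonzero; since $M$ is non-compact, the classification in Section \ref{sec2} forces $c<0$. Then $\Ric(X,X)=c\,|X|^2$ together with $\Ric(X,X)=0$ yields $|X|\equiv 0$, so no nontrivial gradient Killing field exists, which is exactly the assertion that trivial solitons of Killing type are absent here.

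Finally, in the flat case Lemma \ref{flat} and the classification identify $(M,g)$ with $(\R^n,\mathrm{Can})$, where $\nabla^2 f=0$ integrates to an affine function $f(x)=\langle a,x\rangle+\const$, so that $X=\nabla f\equiv a$ is an arbitrary constant vector field. It then remains to recognize such fields as gradients of Busemann functions: I would compute $b_v^{-}$ for the line $\gamma_v(t)=tv$ directly from the definition, expanding $d(x,\gamma_v(t))=\sqrt{t^2-2t\langle x,v\rangle+|x|^2}$ asymptotically, and check that the resulting limit is the linear function $-\langle x,v\rangle$, whose gradient is a constant field; every gradient Killing field is thus $\nabla b_v^{-}$ for a suitable $v$. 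I expect the only delicate step to be this last identification, where one must take the Busemann limit carefully and track the sign convention so that the $\pm$ Busemann functions are attached to the correct linear functional; all the remaining steps are short formal consequences of $\nabla^2 f=0$.
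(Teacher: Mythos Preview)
Your proof is correct and follows the same overall architecture as the paper: both start from the observation that a gradient Killing field has $\nabla^2 f=0$, then split into the compact case (harmonic function is constant), the non-compact non-flat case (the Einstein constant forces $X=0$), and the flat case (identify $f$ with a Busemann function).

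The differences are only in implementation. To extract $\Ric(X,X)=0$, the paper invokes the identity $\|\nabla X\|^2=\Ric(X,X)$ for Killing fields of constant norm, whereas you go straight through parallelism and $R(\,\cdot\,,\,\cdot\,)X=0$; your route is more direct since $\nabla^2 f=0$ is already in hand. In the flat case the paper normalizes $\|\nabla f\|=1$, recognizes $f$ as a harmonic distance function, and then cites Proposition~5.1 of \cite{RS.03} for the Busemann identification, while you integrate $\nabla^2 f=0$ to an affine function and compute the Busemann limit on $\R^n$ explicitly. Your version is thus more self-contained; just be sure to normalize $|a|=1$ (or note the scaling) so that the affine function actually matches a Busemann function, and keep track of the $\pm$ sign convention as you anticipated.
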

 \begin{proof}
 Because $X=\nabla f$ is a non-trivial Killing vector field, we have
 \begin{equation*}
     \nabla^{2}f=0.
 \end{equation*}
 \noindent Therefore, $\rVert \nabla f \rVert = \mbox{constant} \neq 0$,
 consequently, $f$  has no critical points.
\noindent Any Killing vector field of constant norm satisfies 
(p. 164-167, \cite{P Petersen}):
\begin{eqnarray*}
	{\rVert \nabla^{2}f \rVert}^{2} =\mbox{Ric}(\nabla f, \nabla f).
	\end{eqnarray*}
Therefore,
\begin{eqnarray*}
0 = \rVert \nabla^{2} f \rVert = r{\rVert \nabla f \rVert}^{2}
\end{eqnarray*}
 \noindent    This implies that for $f$ non-constant, $r=0$
 and therefore $\mbox{Ric} \equiv 0$ and hence harmonic manifold must be flat (Lemma \ref{flat}).\\
 \noindent We have $\rVert \nabla f \rVert = \mbox{constant}$.
We may assume that  $\rVert \nabla f \rVert = 1$,
therefore $f$ is distance function which is harmonic function on $({\mathbb{R}}^{n}, Can)$.
 By Proposition 5.1 of \cite{RS.03}, it follows that 
 $$f(x) = b_{v}^{-}(x) = - \langle x, v \rangle,$$ is a Busemann  function on ${\mathbb{R}}^n$ \cite{P Petersen}.\\
If $M$ is compact,  $\nabla^{2} f = 0$ implies that $f$ is a harmonic function.
Hence, $f$ must be a constant function.
 \end{proof}

\vspace{0.1in}

\begin{Pro}\label{com}
A  compact harmonic manifold $(M,g)$ does not admit a non-trivial  Ricci soliton.
\end{Pro}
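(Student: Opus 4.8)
The plan is to leverage the Einstein (hence Gaussian) structure already extracted in Proposition~\ref{prop8} and then let compactness collapse the Gaussian constant to zero. Suppose $(M,g)$ is a compact harmonic manifold admitting a Ricci soliton. Being compact, $M$ is complete, so by the gradient soliton proposition above we may write $X=\nabla f$ for some $f\in C^\infty(M)$. Since every harmonic manifold is Einstein, Proposition~\ref{prop8} applies and shows that $f$ is a Gaussian: there is a constant $c=2(\lambda-r)$, as in \eqref{eq23}, with
\begin{equation*}
  \nabla^2 f = c\,g .
\end{equation*}

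The decisive step is to show $c=0$, and this is precisely where compactness is indispensable. Taking the trace of the Gaussian equation in dimension $n$ yields $\Delta f = n c$, a constant function on $M$. Integrating over the closed manifold $M$ and invoking the divergence theorem gives $\int_M \Delta f\,dV_g = 0$, so that $n c\,\vol(M)=0$ and hence $c=0$. I regard this one-line integral identity as the heart of the compact case; the genuinely substantive work has been front-loaded into the gradient reduction and into Proposition~\ref{prop8}, after which no real obstacle remains. It is also exactly the step that has no analogue on the non-compact models of Theorem~\ref{Tash}, where $\nabla^2 f = c\,g$ with $c\neq 0$ can persist.

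It then remains only to deduce triviality. With $c=0$ we have $\nabla^2 f=0$, so $\nabla f$ is parallel and $f$ is harmonic; multiplying $\Delta f=0$ by $f$ and integrating by parts over $M$ gives $\int_M \Vert\nabla f\Vert^2\,dV_g=0$, whence $f$ is constant and $X=\nabla f=0$. Equivalently, $\nabla f$ is a Killing field of gradient type on a compact harmonic manifold, which Lemma~\ref{lem10} already identifies as trivial. Either way the soliton is the trivial one, proving that a compact harmonic manifold admits no non-trivial Ricci soliton. Consistently with Lemma~\ref{flat}, the simply connected flat manifold $\mathbb{R}^n$, on which a nonzero $c$ would survive, is excluded here only because it is non-compact.
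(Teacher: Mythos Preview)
Your argument is correct and in fact cleaner than the paper's own proof. Both begin from the Gaussian equation \eqref{eq23}, but the paper then invokes Bochner's formula,
\[
\tfrac{1}{2}\Delta(\|\nabla f\|^2)=\|\nabla^2 f\|^2+\Ric(\nabla f,\nabla f),
\]
integrates it, and uses the sign $r>0$ (valid for compact harmonic manifolds by the classification recalled in \S\ref{sec2}) to force $\|\nabla f\|\equiv 0$. You bypass all of this: integrating $\Delta f=nc$ directly kills $c$, after which $f$ is harmonic and hence constant. Your route is more elementary and, notably, does not rely on knowing that the Einstein constant is positive; it would work on any compact Einstein manifold. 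The paper's Bochner approach, by contrast, bundles the two conclusions $\lambda=r$ and $\nabla f=0$ into a single integral identity, at the cost of appealing to the sign of the curvature.
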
	
\begin{proof}
We have, $${\nabla}^2 f = 2 (\lambda - r) g.$$
Therefore,  
$\Delta f =  2 (\lambda - r) n$ implies by the Bochner's formula that,
\begin{eqnarray}\label{gradf1}
  \frac{1}{2}  \Delta(\| \nabla f \|^2) = 4 (\lambda - r)^2 n^2 + r (\| \nabla f \|^2).
 \end{eqnarray}
Therefore,
$$ 4 (\lambda - r)^2 n^2 \vol(M)   = - r \int_{M} \| \nabla f \|^2 < 0.$$
This implies that $\| \nabla f \| = 0$, therefore $f$ is constant.
\end{proof}	

\vspace{0.1in}

\begin{Lem}\label{main:lem}
A non-compact, harmonic manifold  admits a non-trivial Ricci  soliton if and only if it is flat. 
The flat harmonic manifold admits  shrinking and  expanding Ricci solitons with the corresponding potential function,
 $f(x) = \lambda  {d(p, x)}^2 + f(p)$, for some $p \in M$.
\end{Lem}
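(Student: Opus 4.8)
The plan is to prove the two implications separately, reducing the forward direction to a Ricci-flatness statement that Lemma~\ref{flat} then converts into flatness. Suppose first that a non-compact harmonic manifold $M$ carries a non-trivial Ricci soliton. Since $M$ is complete, the soliton is of gradient type, so $X=\nabla f$ with $f$ non-constant (non-triviality excludes $f$ constant). As a harmonic manifold $M$ is Einstein, say $\operatorname{Ric}=c_0\,g$, so Proposition~\ref{prop8} gives the Gaussian equation $\nabla^2 f=c\,g$ for a constant $c$. From this I would first record the two consequences $\nabla\lVert\nabla f\rVert^2=2c\,\nabla f$, whence $\lVert\nabla f\rVert^2=2cf+k$ for a constant $k$, and $\Delta f=nc$, which is constant.

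The heart of the proof is then a single application of Bochner's formula
\begin{equation*}
  \tfrac12\Delta\lVert\nabla f\rVert^2=\lVert\nabla^2 f\rVert^2+\langle\nabla f,\nabla\Delta f\rangle+\operatorname{Ric}(\nabla f,\nabla f).
\end{equation*}
The identities above make the left-hand side equal to $c\,\Delta f=nc^2$, while $\lVert\nabla^2 f\rVert^2=\lVert c\,g\rVert^2=nc^2$ and $\nabla\Delta f=0$; the two $nc^2$ terms cancel, forcing $\operatorname{Ric}(\nabla f,\nabla f)=0$. The Einstein condition then reads $c_0\lVert\nabla f\rVert^2\equiv0$, and since $f$ is non-constant this yields $c_0=0$, i.e.\ $M$ is Ricci-flat; Lemma~\ref{flat} now gives that $M$ is flat. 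Note that this argument is uniform in $c$: the subcase $c=0$ is precisely the Killing, steady situation already handled by Lemma~\ref{lem10}, with Busemann potential $b_v^-$.

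For the converse I would realise the flat manifold as $(\R^n,\mathrm{Can})$. Given $\lambda\neq0$, set $f(x)=\lambda\,d(p,x)^2+f(p)$ for a fixed $p$; then $\nabla^2 f=2\lambda\,g$ and $\operatorname{Ric}=0$, so $\operatorname{Ric}+\tfrac12\mathfrak{L}_{\nabla f}g=2\lambda\,g$ exhibits a genuine non-trivial soliton, shrinking when $\lambda>0$ and expanding when $\lambda<0$. Conversely, integrating $\nabla^2 f=c\,g$ on $\R^n$ shows that every such potential is of this form, with $p$ the unique minimum ($c>0$) or maximum ($c<0$) of $f$; here Tashiro's Theorem~\ref{Tash} (case~(2)) can alternatively be used to identify the warped structure as the flat one and to locate $p$.

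The main obstacle is the forward direction, and specifically the exclusion of the non-flat, non-compact harmonic manifolds---real and complex hyperbolic spaces and the Damek--Ricci spaces---which are also Einstein and could a priori support a Gaussian. The Bochner identity disposes of them cleanly because, for a Gaussian, $\tfrac12\Delta\lVert\nabla f\rVert^2$ and $\lVert\nabla^2 f\rVert^2$ coincide exactly; this is the equality case of the Cauchy--Schwarz bound $\lVert\nabla^2 f\rVert^2\ge(\Delta f)^2/n$, made available by $\Delta f$ being constant, and it leaves the Ricci term as the only survivor, which must therefore vanish. Everything downstream of the reduction to Ricci-flatness---the explicit potentials and the determination of soliton type---is then routine.
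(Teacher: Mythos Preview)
Your proof is correct, but the route differs from the paper's. In the forward direction the paper does not invoke Bochner. Instead, from the Gaussian equation $\nabla^2 f = c\,g$ with $c\neq 0$ it argues that $f$ is strictly convex or concave, takes a critical point $p$, shows $f$ is radial about $p$ by integrating along geodesics, and then computes the Laplacian of this radial function via the geodesic-sphere density $\Theta(t)$ specific to harmonic manifolds: the equation $\Delta f = nc$ forces $\Theta'(t)/\Theta(t)=(n-1)/t$, and comparison with the known expansion $\Theta'/\Theta=(n-1)/t-\tfrac{r}{3}t+\cdots$ yields $r=0$.

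Your Bochner argument is more robust and more general: it shows directly that any complete Einstein manifold carrying a non-constant Gaussian must be Ricci-flat, without assuming the existence of a critical point (a point the paper passes over) and without using the harmonic density at all; Lemma~\ref{flat} is then the only place harmonicity enters. The paper's approach, by contrast, leans on the harmonic structure throughout and produces the radial form of $f$ about $p$ as an intrinsic by-product rather than as a separate converse computation.
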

\begin{proof}
Suppose that  a non-compact, harmonic manifold  admits a non-trivial Ricci  soliton.
Therefore, it admits a Gaussian with 
$(\lambda - r) \neq 0$.
$${\nabla}^2 f = 2 (\lambda - r) g.$$
Therefore, $f$ is either convex or concave function. Consequently, the only
possible critical point of $f$ is either maximum or minimum of $f$.
Suppose that $p$ is a critical point of  $f$.
Note that along any unit speed geodesic of $M$ starting from $p$,
\begin{eqnarray}\label{rad}
f''(t) = 2 (\lambda - r).
\end{eqnarray}
Therefore,
$f'(t) =  2 (\lambda - r) t + c$. Hence, there is exactly one critical
point, and hence $c = 0$.
Thus, $f(t) = (\lambda - r) t^2 + f(p),$  consequently $f$ is a radial function.
This implies that,
$$\Delta f = f'' + \frac{{\Theta}'}{\Theta} f' = 2 (\lambda - r) n.$$
Therefore,
$$ f'' +  \frac{{\Theta}'}{\Theta} 2 (\lambda - r) t = 2 (\lambda - r) n.$$
Consequently by (\ref{rad}),
$$ \frac{{\Theta}'(t)}{\Theta(t)} = \frac{n-1}{t}.$$
Comparing with the series expansion (see (4.4) of \cite{RS.03}),
$$\frac{{\Theta}'(t)}{\Theta(t)} = \frac{n-1}{t} - \frac{r\;t}{3} + \cdots,$$
we obtain $r = 0$, hence $M$ is flat.
Finally,  $f(x) = \lambda  {d(p, x)}^2 + f(p)$ follows from
section $1$ of  \cite{CC.96}.
\end{proof}

\vspace{0.1in}

\noindent
Finally we come to the proof of  Theorem \ref{main}.
\\\\
{\bf Proof:}
 A  compact harmonic manifold can't admit non-trivial  Ricci soliton  (Proposition \ref{com}). 
If a non-compact harmonic manifold admits a trival 
Ricci soliton of Killing type, then $(\lambda - r) = 0$, implies that $r = 0.$ Therefore, $M$ is flat and $X=\nabla {b_{v}}^{-}$ (Lemma \ref{lem10}). 
If a non-compact harmonic manifold admits a non-trival 
Ricci soliton, then $(\lambda - r) \neq 0$ again implies that
$r = 0,$  and $M$ is flat. In this case $X = \nabla f$, where
$f(x) =  \lambda  {d(p, x)}^2 + f(p)$, for some $p \in M$
(Lemma \ref{main:lem}).

\vspace{0.1in}

\noindent
\begin{Rem}
We have shown that Theorem \ref{main} confirms Theorem \ref{Tash} in case of harmonic manifolds. Also Theorem \ref{main}  implies that there are no non-trivial deformation of non-flat harmonic  manifolds.  This indicates 
a result supporting the conjecture that, there are no non-trivial deformations of harmonic manifolds; and hence there should be only finitely many classes of harmonic manifolds.
\end{Rem}

\section{Acknowledgements}
 Dr. Naeem Ahmad Pundeer would like to thank to  U.G.C. for its Dr. D.S. Kothari Postdoctoral Fellowship.  The corresponding author, Mr. Paritosh Ghosh, thanks UGC Junior Research Fellowship of India. The authors also
 would like to thank Mr. Dipen Ganguly for his wishful help in this research.

\end{document}